\newtheorem{theorem}{Theorem}
\newtheorem*{theorem*}{Theorem}
\newtheorem{corollary}[theorem]{Corollary}
\newtheorem{prop}[theorem]{Proposition}
\newtheorem{lem}[theorem]{Lemma}
\newtheorem{conjecture}{Conjecture}
\newtheorem*{wilson}{Wilson's Theorem}
\theoremstyle{definition}
\newtheorem{definition}[theorem]{Definition}
\newtheorem{rem}[theorem]{Remark}
\def\A{\mathcal{A}}
\def\B{\mathcal{B}}
\def\C{\mathcal{C}}
\begin{document}

\title{Period Preserving Properties of an Invariant from the Permanent of Signed Incidence Matrices}
\author{Iain Crump, Matt DeVos, Karen Yeats}
\date{}
\maketitle

\normalsize
\vspace{1cm}

\begin{abstract}
A 4-point Feynman diagram in scalar $\phi^4$ theory is represented by a graph $G$ which is obtained from a connected 4-regular graph by deleting a vertex.  The associated Feynman integral gives a quantity called the period of $G$ which is invariant under a number of graph operations --- namely, planar duality, the Schnetz twist, and it also does not depend on the choice of vertex deleted to form $G$.

In this article we study a graph invariant we call the graph permanent, which was implicitly introduced in a paper by Alon, Linial and Meshulam \cite{AlLiMe}.  The graph permanent applies to any graph $G = (V,E)$ for which $|E|$ is a multiple of $|V| - 1$ (so in particular to graphs obtained from a 4-regular graph by removing a vertex).  We prove that the graph permanent, like the period, is invariant under planar duality and the Schnetz twist when these are valid operations, and we show that when $G$ is obtained from a $2k$-regular graph by deleting a vertex, the graph permanent does not depend on the choice of deleted vertex.  
\end{abstract}

\section{Introduction}

For the duration of this introduction, let $\Gamma$ be a 4-regular graph and let $G=\Gamma - v$ for some $v\in V(\Gamma)$.  The graph $\Gamma$ can be uniquely reconstructed from~$G$; we call $\Gamma$ the \emph{completion} of $G$ and $G$ a \emph{decompletion} of $\Gamma$.  We can think of~$G$ as a Feynman diagram, specifically as a 4-point graph in scalar $\phi^4$ theory.  It is natural, then, to ask about the Feynman integral of $G$.  Simplifying by ignoring all physical parameters and considering only graphs with no subdivergences, it makes sense to define a number known as the \emph{period} of~$G$ \cite{bek, Brbig, Mar, Sphi4} (defined more thoroughly in Section~\ref{relationtofeynman}).  This period is the residue of the Feynman integral under a variety of regularizations and so gives a largely renormalization scheme independent aspect of the Feynman integral.

There are a number of graph theoretic operations which are known to preserve the period.  If $G$ is planar, then $G$ and its planar dual have the same period \cite{bkphi4}; this is a consequence of taking a Fourier transform of the Feynman integral.  If $G$ and another graph $G'$ have isomorphic completions, then $G$ and $G'$ have the same period \cite{bkphi4, Sphi4}.  If $\Gamma$ and $\Gamma'$ relate by the Schnetz twist (see Figure~\ref{twist}) and $G$ and $G'$ are decompletions of $\Gamma$ and $\Gamma'$ respectively, then $G$ and $G'$ have the same period \cite{Sphi4}.  Furthermore, if $\Gamma$ has a 3-vertex cut, then the period of $G$ is a product of the periods of two particular minors \cite{bkphi4, Sphi4}.

In view of this, we are interested in graph theoretic properties or invariants which are preserved by planar duality, completion followed by decompletion, and Schnetz twist.  The $c_2$ invariant is an arithmetic graph invariant defined by counting points on the Kirchhoff polynomial which is conjectured to have these properties (see \cite{BrS}); duality is proven in \cite{Dor} for graphs that meet a specific subgraph condition, and it is established in \cite{BrS} that if decompletion is true then the 3-cut condition follows.  We are not aware of any other nontrivial graph invariants thought to satisfy these properties.  

In this paper we introduce the following graph invariant; let $G$ be a graph with $|E(G)| = k(|V(G)|-1)$ for some integer $k$. Construct a signed incidence matrix from $G$ and delete a single arbitrary row. From this, construct a block matrix by stacking the modified incidence matrix $k$ times. Up to sign, we call the permanent of this matrix modulo $k+1$ the \emph{graph permanent}. Extending the concept of completion and decompletion to arbitrary regular graphs (Definition \ref{decompdef}), we prove the following:

\begin{theorem*} Suppose $\Gamma$ and $\Gamma '$ are connected $2k$-regular graphs. 
\begin{itemize}
\item Any two decompletions of $\Gamma$ have equal graph permanent (Theorem~\ref{invariance}).
\item If $\Gamma$ and $\Gamma'$ differ by a Schnetz twist, any pair of decompletions of $\Gamma$ and $\Gamma'$ will have equal graph permanents  (Proposition~\ref{schnetz}).  
\end{itemize} 
Further, let $G$ be a graph such that $|E(G)| = 2(|V(G)|-1)$ and $G^*$ its planar dual.
\begin{itemize}
\item The graph permanents of $G$ and $G^*$ are equal (Proposition~\ref{dual}).
\end{itemize}
Finally, suppose $\Gamma$ is a $4$-regular graph.
\begin{itemize}
\item If $\Gamma$ has a $3$-vertex cut, then the graph permanent of any decompletion of $\Gamma$ is the product of the graph permanents of two particular minors (Corollary~\ref{3cut}).
\end{itemize} \end{theorem*}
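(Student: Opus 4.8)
The plan is to read the factorisation off the block structure that a $3$-vertex cut forces on the stacked signed incidence matrix, after first normalising the cut with the tools already in hand. Fix a $3$-vertex cut $\{a,b,c\}$ of the $4$-regular graph $\Gamma$ and write $\Gamma-\{a,b,c\}=H_1\sqcup H_2$, grouping the components into two nonempty pieces. Counting the twelve edge-ends at $a,b,c$, and discarding the non-minimal cases in which some cut vertex sends all its edges to one side, one is left with essentially one configuration: $\{a,b,c\}$ is independent and each of $a,b,c$ has exactly two neighbours in $H_1$ and two in $H_2$. I would reduce the few remaining configurations (an edge inside $\{a,b,c\}$, or an unbalanced split at a cut vertex) to this one by a Schnetz twist, using Proposition~\ref{schnetz} to make the reduction free, and I would settle the degenerate cases where $H_1$ or $H_2$ is very small by hand. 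Then I would form the two $4$-regular graphs $\Gamma_i=\Gamma[V(H_i)\cup\{a,b,c\}]+\{ab,bc,ca\}$, namely one side of the cut together with a triangle on $\{a,b,c\}$; by Theorem~\ref{invariance} each has a well-defined graph permanent, and I would identify the two factors of the final identity as graph permanents of explicit minors of $\Gamma$ --- obtained by collapsing the induced subgraph on the opposite side of the cut and doing a bounded amount of clean-up --- relating these to $\Gamma_1$ and $\Gamma_2$ by directly comparing the corresponding stacked matrices.

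Next I would make the cut visible in the matrix. Decomplete $\Gamma$ at a vertex $v\in V(H_1)$ and delete the incidence row of $a$. Ordering the remaining rows as (rows of $H_1\setminus v$; row $b$; row $c$; rows of $H_2$) and the columns (the edges of $\Gamma-v$) as (edges meeting only $H_1\setminus v$ and $a$; the four edges from $\{b,c\}$ to $H_1$; the four edges from $\{b,c\}$ to $H_2$; edges meeting only $H_2$ and $a$), the once-stacked reduced incidence matrix $\tilde B$ acquires two large diagonal blocks $A_1$ and $A_2$, one per side of the cut, coupled only through the two rows $b$ and $c$. The stacked matrix $M=\begin{pmatrix}\tilde B\\ \tilde B\end{pmatrix}$ inherits this shape, with the $b$- and $c$-rows now duplicated.

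The heart of the argument is a combinatorial expansion of $\operatorname{perm} M$. Expanding along the first copy of $\tilde B$ gives $\operatorname{perm} M=\sum_{S}\operatorname{perm}(\tilde B_S)\,\operatorname{perm}(\tilde B_{\overline S})$ over the ways of splitting the columns into two halves. One then checks that in every nonzero term the edges of $H_1$ and of $H_2$ must be used in a self-contained way, so that each side contributes a permanent of a submatrix of the stacked matrix of the corresponding $\Gamma_i$, while the edges incident to $b$ and $c$ are forced into a configuration whose combinatorics, read modulo $k+1=3$, reproduces exactly the effect of the added triangle $ab,bc,ca$ in the stacked matrices of $\Gamma_1$ and $\Gamma_2$. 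Assembling these contributions, the sum collapses modulo $3$ to $\pm\bigl(\text{graph permanent of }\Gamma_1\bigr)\bigl(\text{graph permanent of }\Gamma_2\bigr)$, the overall sign being absorbed by the ``up to sign'' in the definition. I expect the main obstacle to be precisely this modulo-$3$ bookkeeping of the coupling terms: because the permanent, unlike the determinant, is not invariant under adding one row to another, the off-diagonal coupling cannot simply be cleared away, so the cancellation must be produced by hand from the $4$-regularity and the exact shape of the cut; the preliminary twist normalisation and the identification of the two factors as honest minors are the secondary technical points.
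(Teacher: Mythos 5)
Your proposal only engages with the last bullet (the $3$-vertex-cut product formula); you take the first three as given, which is fine for the twist and completion invariance since you use them as tools, but it leaves the duality bullet unaddressed. On the part you do attack, there is a genuine gap, and it stems from one choice: you decomplete at a vertex $v$ inside $H_1$ and make $a$ special, so that \emph{both} remaining cut vertices $b$ and $c$ survive as rows of the matrix, each duplicated in the two blocks. All the interaction between the two sides then runs through four rows, and you yourself flag the resulting modulo-$3$ bookkeeping as ``the main obstacle'' without carrying it out. That cancellation is the entire content of the claim; a proof that stops at ``the sum collapses modulo $3$ to the product'' has not proved anything. The paper's route dissolves this difficulty before it arises: by Theorem~\ref{invariance} one may decomplete $\Gamma$ at one of the three cut vertices, which turns the $3$-vertex cut of $\Gamma$ into a $2$-vertex cut $\{v_1,v_2\}$ of the decompletion; choosing $v_2$ as the special vertex deletes its row, so the two sides are coupled through the single row $(C\,|\,D)$ of $v_1$ (duplicated once). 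Cofactor expansion along those two copies, together with the pigeonhole lemma (Lemma~\ref{pigeon}) forcing one selected column on each side, gives the product formula directly with only the factor $2\equiv -1\pmod 3$ to track --- this is Theorem~\ref{2vertexcut}, and Corollary~\ref{3cut} is then a two-line reduction to it. If you want to rescue your version, the cleanest fix is to move the decompletion vertex and the special vertex onto the cut, i.e.\ to reprove Theorem~\ref{2vertexcut} rather than fight the two-row coupling.

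Your preliminary ``normalisation'' step is also not sound as stated. It is false that a $3$-vertex cut of a $4$-regular graph is essentially forced to be independent with each cut vertex sending two edges to each side: parity only forces each side to receive an even total number of cut edges, so splits such as $1{+}3$ at an individual cut vertex, or $4{+}8$ overall, occur. More importantly, a Schnetz twist replaces $\Gamma$ by a different graph $\Gamma'$, so even where a twist applies it does not reduce the identity for $\Gamma$ and \emph{its} minors to the identity for $\Gamma'$ and its minors without a further argument relating the two pairs of minors. The paper sidesteps this entirely because Corollary~\ref{3cut} is stated for the specific configuration of Figure~\ref{3vertexcut} (the completion of the $2$-cut picture), so no normalisation is needed. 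Your identification of the factors --- one side of the cut plus a triangle on $\{a,b,c\}$, whose decompletion at a cut vertex is one side plus the edge between the other two --- does agree with the paper's minors in that configuration, so the target of your computation is the right one; it is the computation itself and the normalisation that are missing.
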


\noindent Further we prove a product property when $K$ has a 4-edge cut (Theorem~\ref{4edgecut}) that corresponds to the case of subdivergences in the Feynman graph.

 Our invariant is well defined on a wider class of graphs than decompletions of 4-regular graphs and relates naturally to flow questions on graphs.  Indeed, when this invariant is nonzero, it implies, by way of the Alon-Tarsi polynomial technique (\cite{AlTar}), that the graph in question has a modular $k$ orientation (or equivalently a $\mathbb{Z}_k$ flow using only the values $\pm 1$).  Furthermore, in the key case of 4-regular graphs, the polynomial is closely related to the Feynman integrand.  This will be explained in detail in Section~\ref{flow_sec}

Completion invariance for $2k$-regular graphs can be distilled into a curious identity for graphs (Theorem~\ref{agreementidentity}), and we close the introduction with a  description of this.   Let $G$ be $2k$-regular, and fix an orientation of the edges of $G$ which we call the \emph{reference orientation}.  Now define an arbitrary orientation of $G$ to be \emph{odd} \emph{(even)} if the number of edges for which this orientation disagrees with the reference orientation is odd (even).  Let $s,t$ be distinct vertices of $G$.  Any orientation for which $\mathrm{deg}^+(s) = 2k = \mathrm{deg}^-(t)$ and $\mathrm{deg}^+(v) = k = \mathrm{deg}^-(v)$ for every $v \in V(G) \setminus \{s,t\}$ will be called an $s$-\emph{to}-$t$ orientation.  Let $E_{s, t}$ $(O_{s,  t})$ denote the number of even (odd) $s$-to-$t$ orientations.  With this terminology, we can state this new identity as follows.

\begin{theorem*}
If $G$ is a $2k$-regular graph, and $(s,t)$, $(s',t')$ are pairs of distinct vertices of $G$, then
\[ E_{s , t} - O_{s , t} \equiv    E_{s' , t'} - O_{s' , t'}  \pmod {k+1}.\] 
\end{theorem*}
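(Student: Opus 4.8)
The plan is to recognize the claimed congruence as the orientation-theoretic restatement of completion invariance (Theorem~\ref{invariance}), which we are free to use. We may assume $G$ is connected: if $s$ and $t$ lie in different components then $E_{s,t}=O_{s,t}=0$ (the component of $s$ would carry a nonzero net flow), and in general the count factors as $(E^{H}_{s,t}-O^{H}_{s,t})$ — for the component $H$ containing $s$ and $t$ — times a factor independent of the pair. Since $G$ is connected and $2k$-regular it admits an Eulerian orientation, one with $\deg^{+}(v)=\deg^{-}(v)=k$ for every $v$, and we take this as the reference orientation; this costs no generality, because reorienting a single reference edge negates $E_{s,t}-O_{s,t}$ \emph{simultaneously for every pair} $(s,t)$, so the congruence holds for one reference orientation iff it holds for all of them.

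First I would encode the signed orientation counts in the graph polynomial $P_{G}(x)=\prod_{e}(x_{u}-x_{v})$, the product over $e\in E(G)$ with $u\to v$ the reference orientation of $e$: selecting $x_{u}$ from the factor of $e$ corresponds to orienting $e$ out of $u$ (agreeing with the reference) and selecting $-x_{v}$ to reversing it, so the coefficient of $\prod_{v}x_{v}^{c_{v}}$ in $P_{G}$ is exactly the signed number of orientations with $\deg^{+}(v)=c_{v}$ for all $v$; in particular
\[
E_{s,t}-O_{s,t}=\bigl[\,x_{s}^{2k}\textstyle\prod_{v\neq s,t}x_{v}^{k}\,\bigr]P_{G}.
\]
Next I would tie this coefficient to the graph permanent of the decompletion $G-t$. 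Since $|E(G-t)|=|E(G)|-2k=k(|V(G)|-2)=k(|V(G-t)|-1)$, that graph permanent is defined, via the permanent of the signed incidence matrix of $G-t$ with the row of a vertex $s\neq t$ deleted and $k$ copies stacked. A nonzero term of that permanent is a map $d\colon E(G-t)\to V(G-t)\setminus\{s\}$ with $d(e)\in e$ and $|d^{-1}(v)|=k$ for every $v$; it occurs with multiplicity $(k!)^{|V(G)|-2}$ and sign $\prod_{e}B_{d(e),e}$, $B$ the signed incidence matrix. Extending $d$ to $E(G)$ by orienting every edge at $t$ toward $t$ gives a sign-faithful bijection between these terms and the $s$-to-$t$ orientations of $G$ (all edges at $s$ then leave $s$, all edges at $t$ enter $t$, balance holds elsewhere); equivalently, setting $x_{t}=0$ in $P_{G}$ yields $P_{G}|_{x_{t}=0}=(-1)^{\deg^{+}(t)}\bigl(\prod_{u\in N(t)}x_{u}\bigr)P_{G-t}$ (neighbour multiset, $\deg^{+}$ with respect to the reference), with $\deg^{+}(t)=k$ because the reference orientation is Eulerian, and comparing coefficients of $x_{s}^{2k}\prod_{v\neq s,t}x_{v}^{k}$ gives the same relation. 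With Wilson's theorem, $(k!)^{|V(G)|-2}\equiv(-1)^{|V(G)|-2}\pmod{k+1}$, so
\[
E_{s,t}-O_{s,t}\equiv\varepsilon\cdot(\text{graph permanent of }G-t)\pmod{k+1},
\]
where $\varepsilon\in\{\pm1\}$ depends only on $k$, $|V(G)|$ and the fixed sign convention in the definition of the graph permanent — crucially \emph{not} on the pair $(s,t)$.

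Finally, $G-t$ and $G-t'$ are decompletions of the $2k$-regular graph $G$ (Definition~\ref{decompdef}), so by Theorem~\ref{invariance} they have equal graph permanent, whence $E_{s,t}-O_{s,t}\equiv E_{s,t'}-O_{s,t'}\pmod{k+1}$ whenever $s\notin\{t,t'\}$. The source can be changed as well — either because the graph permanent does not depend on which row of the incidence matrix is deleted, or from the elementary identity $E_{s,t}-O_{s,t}=(-1)^{|E(G)|}(E_{t,s}-O_{t,s})$ (reverse an orientation edgewise) combined with the previous congruence. A short chain of such moves, inserting an auxiliary vertex when the source and sink would otherwise collide, connects any admissible pair $(s,t)$ to any other $(s',t')$, which proves the theorem. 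I expect the real work to be the sign bookkeeping in the third paragraph: matching the permanent expansion of the stacked incidence matrix of $G-t$ to the even/odd split of the $s$-to-$t$ orientations with an overall sign provably independent of $t$, so that one obtains the stated \emph{congruence} and not merely an equality up to sign. Choosing an Eulerian reference orientation (to make the $\deg^{+}(t)=k$ correction uniform) and passing through the graph polynomial (whose monomial coefficients are unambiguous signed orientation counts) are exactly the devices that handle this; this is also the point where primality of $k+1$ is used, through Wilson's theorem, to absorb the factor $(k!)^{|V(G)|-2}$.
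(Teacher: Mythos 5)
Your proposal is correct and takes essentially the same route as the paper: both identify $E_{s,t}-O_{s,t}$ (up to a unit modulo $k+1$) with the permanent of the $k$DSI matrix of $G-t$ with special vertex $s$ --- your generating polynomial $P_G$ and the evaluation at $x_t=0$ is just a reformulation of the paper's extension of edge tags to the completion --- and then conclude by the already-established invariance under change of special and decompleted vertex, fixing the reference orientation issue by noting that reorienting one edge negates both sides. If anything, your bookkeeping is slightly more careful than the paper's, which writes $\text{Perm}(M)=(-1)^k(E_{s,t}-O_{s,t})$ without the $(k!)^{|V|-2}$ colour-permutation factor that you correctly absorb via Wilson's theorem.
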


\section{A block matrix construction.}

Throughout this paper, all graphs are assumed to be connected and loop-free. We allow parallel edges.

\begin{definition} Let $G$ be a graph. Arbitrarily apply directions to the edges in $G$, and  let $M^*$ be the incidence matrix associated with this digraph; columns indexed by edges and rows by vertices. Select a vertex $w$ in $V(G)$, and delete the row indexed by $w$ in $M^*$. Call this new matrix $M$. Let $k$ be a positive integer. Define a \emph{$k$-duplicated signed incidence matrix} (herein \emph{$k$DSI matrix}) of $G$ to be the block matrix $$\left. \left[ \begin{array}{c} M\\ \hline M \\ \hline \vdots \\ \hline M \end{array} \right] \right\}\text{$k$ times}.$$ Further, we call $w$ the \emph{special vertex} in the construction of this $k$DSI matrix. \end{definition}

Our interests lie in graphs $G$ that have $|E(G)| = k(|V(G)|-1)$ for some integer $k$, as this results in a square $k$DSI matrix and allows for permanent calculations.

\begin{definition} Let $A=(a_{i,j})$ be an $n$-by-$n$ matrix. The \emph{permanent} of $A$ is $$\text{Perm}(A) = \sum_{\sigma \in S_n} \prod_{i=1}^n a_{i,\sigma(i)},$$ where the sum is over all elements of the symmetric group $S_n$. \end{definition}

If a particular $\sigma \in S_n$ is such that $ \prod_{i=1}^n a_{i,\sigma(i)} \neq 0$, we will say that it \emph{contributes} to the permanent. We may alternately define a contribution from an appropriate selection of non-zero elements in the matrix.

From the definition of the permanent, we see that it is the determinant with signs not taken into account. In fact, the permanent also can be computed using cofactor expansion, similar to the determinant. As $1 \equiv -1 \pmod{2}$, any square matrix $M$ has $\text{Perm}(M) \equiv \det (M) \pmod{2}$, which suggests that the permanent may have some interesting properties modulo integers.

\begin{rem} \label{rowops} From the definition of the permanent, it is clear that we may interchange two rows or columns without affecting the permanent. Further, multiplying a row or column by a constant results in the permanent being multiplied by that constant. \end{rem}

What happens when a multiple of one row is added to another is less clear, and in general not well behaved. However, there is greater control with the $k$DSI matrix modulo $k+1$, which will be examined in Lemma~\ref{redlem} and Corollary~\ref{reduction}.

\begin{lem}\label{comrow} For an $n \times n$ matrix $M$, if there is a set $\{a_1,a_2,...,a_m\}$ such that rows $r_{a_1}, r_{a_2}, ..., r_{a_m}$ are equal, there is a factor of $m!$ in the permanent of $M$. \end{lem}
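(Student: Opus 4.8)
The plan is to exploit the symmetry of the permanent under permuting the rows $r_{a_1}, \ldots, r_{a_m}$ among themselves, together with the fact that the permanent is a sum over $S_n$ of products, one entry chosen from each row and each column. First I would fix any contributing permutation $\sigma \in S_n$; restricting $\sigma$ to the row-index set $\{a_1, \ldots, a_m\}$ picks out a set of $m$ columns $C = \{\sigma(a_1), \ldots, \sigma(a_m)\}$, together with a bijection from $\{a_1,\ldots,a_m\}$ to $C$. Because the rows $r_{a_1}, \ldots, r_{a_m}$ are literally equal, precomposing with any permutation $\tau$ of $\{a_1, \ldots, a_m\}$ yields another permutation $\sigma' = \sigma \circ \tau$ whose contribution $\prod_i a_{i, \sigma'(i)}$ equals $\prod_i a_{i, \sigma(i)}$ entry by entry (the factors coming from rows outside the set are unchanged, and the $m$ factors coming from the equal rows are the same multiset of matrix entries, just re-indexed).

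Next I would observe that this action of the symmetric group on $\{a_1, \ldots, a_m\}$ (acting on $S_n$ by precomposition) partitions the contributing permutations into orbits, each of size exactly $m!$: the action is free because if $\sigma \circ \tau = \sigma$ then $\tau$ is the identity. Hence the set of all $\sigma$ with a fixed restricted column set $C$ and fixed values off $\{a_1,\ldots,a_m\}$ splits into orbits of size $m!$, and within each orbit every term has the same value. Summing, the total permanent is $m!$ times the sum of one representative from each orbit, which exhibits the claimed factor of $m!$.

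The only point requiring a little care — and the main (mild) obstacle — is confirming that the value $\prod_{i=1}^n a_{i,\sigma'(i)}$ really is unchanged under $\sigma' = \sigma \circ \tau$: one must check that since the rows are equal, $a_{a_j, c} = a_{a_\ell, c}$ for all columns $c$, so reassigning which equal row hits which column in $C$ does not change the product. This is immediate from the hypothesis but is the crux of why the lemma holds. Everything else is bookkeeping about group actions and orbit sizes. An alternative, essentially equivalent route is a direct Laplace-type expansion along the $m$ equal rows, grouping the $m!$ orderings of any chosen $m$-subset of columns; I would present the orbit argument since it is cleaner and avoids signs entirely.
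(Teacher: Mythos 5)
Your proof is correct and is essentially the paper's argument: the paper likewise groups the $n!$ permutations into classes of size $m!$ whose terms agree because the rows $r_{a_1},\ldots,r_{a_m}$ are equal, choosing as representatives those $\sigma$ with $\sigma(a_1)<\cdots<\sigma(a_m)$ rather than phrasing it as orbits of a free $S_m$-action by precomposition. The orbit language is just a more explicit packaging of the same idea, so there is nothing to change.
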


\begin{proof}We may write \begin{align*} \text{Perm}(M) &= \sum_{\sigma \in S_n} \prod_{i=1}^n a_{i,\sigma(i)} \\ &=m! \sum_{\sigma \in S_n^*} \prod_{i=1}^n a_{i,\sigma(i)}, \end{align*} where $S_n^*$ is the set of elements of the symmetric group such that $\sigma(a_1)<\sigma(a_2)< \cdots < \sigma(a_m)$, and the $m!$ term allows for further permutations of these elements.   \end{proof}

\begin{lem}\label{redlem} Let $M$ be a matrix and $r_i$ and $r_j$ rows of $M$, $r_i \neq r_j$ as vectors. Suppose there are $k$ copies of $r_j$ in $M$.  Let $M'$ be a matrix derived from $M$ by adding a constant integer multiple of $r_j$ to $r_i$. Then $\text{Perm}(M) \equiv \text{Perm}(M') \pmod {k+1}$. \end{lem}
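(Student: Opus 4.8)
The plan is to exploit the multilinearity of the permanent in its rows, combined with Lemma~\ref{comrow}. Let $c$ denote the integer multiplier, so that the $i$-th row of $M'$ is $r_i' = r_i + c\, r_j$ while every other row of $M'$ agrees with that of $M$. Since $\text{Perm}$ is a linear function of the $i$-th row when all other rows are held fixed (this is immediate from the defining sum $\sum_{\sigma}\prod_\ell a_{\ell,\sigma(\ell)}$, in which each term contains exactly one factor from row $i$), I would expand
$$\text{Perm}(M') = \text{Perm}(M) + c\cdot \text{Perm}(M''),$$
where $M''$ is the matrix obtained from $M$ by replacing row $r_i$ with $r_j$ and leaving all other rows unchanged.

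The crucial point is then that $M''$ contains $k+1$ identical rows: the $k$ original copies of $r_j$ present in $M$, together with the new copy just installed in position $i$. Applying Lemma~\ref{comrow} with $m = k+1$ shows that $(k+1)!$, and in particular $k+1$, divides $\text{Perm}(M'')$. Feeding this back into the displayed identity gives $\text{Perm}(M') - \text{Perm}(M) = c\cdot\text{Perm}(M'') \equiv 0 \pmod{k+1}$, which is exactly the assertion.

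The only place that genuinely requires care — and the reason the hypotheses are stated the way they are — is the bookkeeping that guarantees $M''$ has precisely $k+1$ (not fewer, not accidentally more for a trivial reason) equal rows. The assumption $r_i \neq r_j$ as vectors ensures that row $i$ was not already among the $k$ copies of $r_j$ in $M$, so overwriting it with $r_j$ strictly increases the multiplicity from $k$ to $k+1$; and since we only need divisibility by $k+1$, any extra coincidental equalities among other rows do no harm. Beyond this observation the argument is routine, so I do not anticipate a substantive obstacle.
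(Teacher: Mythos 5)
Your proof is correct and takes essentially the same route as the paper's: the paper performs cofactor expansion along the $i$-th row, which is exactly the row-multilinearity you invoke, arrives at the same identity $\text{Perm}(M') = \text{Perm}(M) + c\,\text{Perm}(M'')$ with $M''$ having row $i$ overwritten by $r_j$, and disposes of the second term via Lemma~\ref{comrow} applied to the $k+1$ equal rows. No changes needed.
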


\begin{proof}   Suppose that $$M = (m_{x,y}) = \left[ \begin{array}{c} r_1 \\ r_2 \\ \vdots  \end{array} \right], \text{ } M' = ({m'}_{x,y}) =  \left[ \begin{array}{c} r_1 \\ \vdots \\ r_i +cr_j  \\ \vdots  \end{array} \right].$$ Define $N$ as the matrix $M$ with row $i$ removed. We will use $N_t$ to denote the matrix $N$ with column $t$ removed. By cofactor expansion along the $i^\text{th}$ row, \begin{align*} \text{Perm}\left(M\right) &= \sum_{t=1}^{kn} m_{i,t}  \text{Perm}\left(N_t\right),\\ \text{Perm}\left(M'\right) &= \sum_{t=1}^{kn} {m'}_{i,t} \text{Perm}(N_t) \\ &=  \sum_{t=1}^{kn}(m_{i,t} + cm_{j,t}) \text{Perm}(N_t) \\ &=  \text{Perm}\left(M\right) + c  \text{Perm} \left[ \begin{array}{c} r_1 \\ \vdots \\ r_{i-1} \\ r_j \\ r_{i+1} \\ \vdots  \end{array} \right]  .\end{align*} As this last matrix has $k+1$ copies of row $r_j$, it has permanent congruent to zero modulo $k+1$ by Lemma~\ref{comrow}.\end{proof}

Throughout this paper, we will consider only matrix operations performed simultaneously in all blocks. The following corollary follows immediately from Lemma \ref{redlem}.

\begin{corollary}\label{reduction} Suppose $M$ is a block matrix made of $k$ identical blocks stacked, and $r_i$ and $r_j$ are rows of $M$ in a common block, $i \neq j$. Let $M'$ be a matrix derived from $M$ by adding a constant integer multiple of $r_j$ to $r_i$ in each block. Then $\text{Perm}(M) \equiv \text{Perm}(M') \pmod {k+1}$. \end{corollary}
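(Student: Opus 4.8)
The plan is to obtain Corollary~\ref{reduction} from Lemma~\ref{redlem} by peeling off the block operations one at a time: instead of adding $c\cdot r_j$ to $r_i$ in all $k$ blocks simultaneously, I would perform the $k$ single-block additions in sequence and apply Lemma~\ref{redlem} to each in turn.

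Concretely, set $M_0 = M$, and for $\ell = 1,\dots,k$ let $M_\ell$ be obtained from $M_{\ell-1}$ by adding $c$ times the copy of $r_j$ in the $\ell$-th block to the copy of $r_i$ in the $\ell$-th block. Then $M_k = M'$, so it suffices to prove $\text{Perm}(M_{\ell-1}) \equiv \text{Perm}(M_\ell) \pmod{k+1}$ for every $\ell$ and then chain the congruences by transitivity. To invoke Lemma~\ref{redlem} at step $\ell$, I must check its hypotheses for the two rows involved as they sit inside $M_{\ell-1}$: the copy of $r_i$ in block $\ell$ and the copy of $r_j$ in block $\ell$. These are distinct as vectors, since the operations performed in steps $1,\dots,\ell-1$ only altered copies of $r_i$ in earlier blocks, so both of these rows still equal their originals and $r_i \neq r_j$ by hypothesis; and the vector $r_j$ still occurs at least $k$ times in $M_{\ell-1}$, because all $k$ of its original copies (one per block) are untouched. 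Lemma~\ref{redlem} then applies and yields the desired step congruence.

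There is no substantive obstacle here --- this really is the ``follows immediately'' situation flagged just before the statement. The only point requiring care is the bookkeeping that keeps the hypotheses of Lemma~\ref{redlem} valid at each intermediate matrix $M_{\ell-1}$, namely that the running modifications never disturb the $k$ copies of $r_j$. (If it should happen that $r_i + c r_j$ coincides with some other row of the matrix, this is harmless: Lemma~\ref{redlem} only needs the $k$ unmodified copies of $r_j$ to be present.) Once this is observed the corollary is immediate.
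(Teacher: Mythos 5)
Your proof is correct and matches the paper's approach: the paper simply asserts that the corollary ``follows immediately from Lemma~\ref{redlem},'' and your sequential block-by-block application of that lemma, with the observation that the $k$ copies of $r_j$ remain undisturbed at every intermediate stage, is exactly the derivation being left implicit. The bookkeeping you supply is the only content needed.
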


\begin{prop} \label{specialchoice} The choice of special vertex only affects the overall sign of the permanent modulo $k+1$  in a $k$DSI matrix. If $k$ is odd, changing the special vertex results in a sign change. If $k$ is even, changing special vertex has no effect on the permanent. \end{prop}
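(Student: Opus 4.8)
The plan is to fix the underlying orientation of the edges of $G$ and compare the two $k$DSI matrices obtained from the two special vertices $w$ and $w'$; call these block matrices $B_w$ and $B_{w'}$. The structural fact I would exploit is that in the full signed incidence matrix $M^*$ every column has exactly one entry $+1$ and one entry $-1$, so the sum of all rows of $M^*$ is the zero vector. Writing $r_v$ for the row of $M^*$ indexed by $v$, this gives $r_w = -\sum_{v \neq w} r_v$, and in particular $r_{w'} + \sum_{v \in V(G)\setminus\{w,w'\}} r_v = -r_w$.

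First I would note that $B_w$ is $k$ identical stacked copies of the block $M$ whose rows are $\{r_v : v \neq w\}$; in particular this block contains the row $r_{w'}$. Applying Corollary~\ref{reduction}, I add the row $r_v$ to the row $r_{w'}$ — simultaneously in every block — successively for each $v \in V(G)\setminus\{w,w'\}$. None of the rows $r_v$ being added is itself ever modified, and after each step the matrix is still $k$ identical stacked blocks, so each step is a legitimate instance of Corollary~\ref{reduction} and preserves the permanent modulo $k+1$. After these $|V(G)|-2$ steps the row that was $r_{w'}$ has become $r_{w'}+\sum_{v\neq w,w'}r_v = -r_w$, and every other row is unchanged. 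Call the resulting block matrix $B'$; then $\text{Perm}(B_w) \equiv \text{Perm}(B') \pmod{k+1}$.

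Next I would identify $B'$ with $B_{w'}$ up to sign. A block of $B'$ has row set $\{r_v : v \in V(G)\setminus\{w,w'\}\} \cup \{-r_w\}$, while a block of $B_{w'}$ has row set $\{r_v : v \in V(G)\setminus\{w,w'\}\} \cup \{r_w\}$. Hence, after permuting rows within each block identically to align the orderings (which does not change the permanent), $B'$ is obtained from $B_{w'}$ by negating exactly one row in each of the $k$ blocks. By Remark~\ref{rowops}, negating a row multiplies the permanent by $-1$, so $\text{Perm}(B') = (-1)^k\,\text{Perm}(B_{w'})$. Combining this with the previous congruence yields
$$\text{Perm}(B_w) \equiv (-1)^k\,\text{Perm}(B_{w'}) \pmod{k+1},$$
which is exactly the claim: changing the special vertex changes the permanent modulo $k+1$ only by the overall sign $(-1)^k$, so it flips sign when $k$ is odd and has no effect when $k$ is even. (The edge case $|V(G)|=2$ requires no row operations, since then $r_{w'} = -r_w$ directly.)

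The only real care needed — more bookkeeping than genuine obstacle — is ensuring that each of the successive row operations is a valid application of Corollary~\ref{reduction}: one must add only rows that remain equal across all $k$ blocks and are distinct from the target row, and one must check that the intermediate matrices stay in the form "$k$ identical blocks stacked." Both hold because every operation is carried out identically in all blocks and the rows $r_v$ with $v \neq w,w'$ are left untouched throughout.
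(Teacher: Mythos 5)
Your proof is correct and follows essentially the same route as the paper's: both use the row identity $r_w = -\sum_{v\neq w} r_v$, apply Corollary~\ref{reduction} to add the other rows to $r_{w'}$ in every block, and account for the single negated row per block giving the factor $(-1)^k$. Your version is just slightly more explicit about verifying the hypotheses of Corollary~\ref{reduction} and the $|V(G)|=2$ edge case.
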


\begin{proof} For signed incidence matrix $M^*$, let $r_1, ... , r_n$ be the rows associated to vertices $1,...,n$ in the original graph $G$, and suppose vertex $i$ is the special vertex, $i \in \{1,...,n\}$. Then, $$r_i = -(r_1 + r_2 + \cdots + r_{i-1} + r_{i+1} + \cdots + r_n),$$ a property of the signed incidence matrix. For all blocks in $M$, we may therefore turn row $r_j$, $i \neq j$, into row $r_i$ using the above equation. By Corollary~\ref{reduction}, only the multiplication of a row in each block by $-1$ affects the permanent modulo $k+1$, flipping the overall sign once for each block. This produces the $k$DSI matrix where $j$ was the special vertex; the permanent is unaffected if there is an even number of blocks, and multiplied by $-1$ if there is an odd number of blocks.\end{proof}

\begin{corollary} \label{oddcase} Any $k$DSI matrix from a graph $G$ with $|V(G)|>2$ and odd $k$ has permanent zero modulo $k+1$. \end{corollary}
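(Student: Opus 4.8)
The plan is to prove the stronger statement that, over $\mathbb{Z}$, the permanent of \emph{any} $k$DSI matrix of $G$ is divisible by $(k!)^{|V(G)|-1}$, and then to check that $k+1$ divides $(k!)^{|V(G)|-1}$ whenever $k$ is odd and $|V(G)|-1\ge 2$. Both parts are short; the first is a strengthening of Lemma~\ref{comrow} and the second is elementary number theory.

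For the divisibility by $(k!)^{|V(G)|-1}$, recall that a $k$DSI matrix consists of $k$ stacked copies of the $(|V(G)|-1)\times|E(G)|$ matrix $M$, so each of its $|V(G)|-1$ rows occurs exactly $k$ times, and these $|V(G)|-1$ rows are pairwise distinct (in a connected loop-free graph on more than one vertex, no two vertices have equal rows in a signed incidence matrix, since any edge incident with a vertex $u$ already distinguishes the row of $u$ from every other row). I would then run the counting argument of Lemma~\ref{comrow} simultaneously over all of these groups of equal rows: restricting the sum $\sum_{\sigma}\prod_i a_{i,\sigma(i)}$ to those $\sigma$ that are increasing on each of the $|V(G)|-1$ blocks of $k$ equal rows cuts the number of surviving terms by a factor $(k!)^{|V(G)|-1}$ and multiplies each surviving term by that same factor. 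Hence $(k!)^{|V(G)|-1}$ divides the permanent; this is simply Lemma~\ref{comrow} iterated over disjoint sets of equal rows.

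For the number-theoretic step, let $k$ be odd and write $k+1=2m$ with $m=(k+1)/2$. If $k\ge 3$ then $2\le m\le k$, so $2$ and $m$ both occur among the factors $1,2,\dots,k$ of $k!$; writing $k!=2a$ and $k!=mb$ gives $(k!)^2=2m(ab)$, so $(k+1)\mid(k!)^2$. Since $|V(G)|>2$ forces $|V(G)|-1\ge 2$, we get $(k+1)\mid(k!)^2\mid(k!)^{|V(G)|-1}$, which divides the permanent, so the permanent is $\equiv 0\pmod{k+1}$. For odd $k\ge 5$ a single factor already suffices, since then $k+1$ is composite and $>4$, so $(k+1)\mid k!$; it is precisely the value $k=3$, where $k!=6$ is not divisible by $k+1=4$ but $(k!)^2=36$ is, that forces the use of the hypothesis $|V(G)|>2$ (and the remaining odd value $k=1$, where $G$ is a tree and the $k$DSI matrix is just $M$, is degenerate and should be excluded or checked by hand). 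I do not foresee a serious obstacle here; the only point needing a little care is the bookkeeping in the first step, namely confirming that the factorials coming from the $|V(G)|-1$ disjoint groups of equal rows genuinely multiply together in the permanent.
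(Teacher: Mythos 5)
Your route is genuinely different from the paper's and, for odd $k \ge 3$, it is correct and in fact sharper. The divisibility $(k!)^{|V(G)|-1} \mid \text{Perm}(M)$ over $\mathbb{Z}$ is exactly Lemma~\ref{comrow} run simultaneously over the $|V(G)|-1$ disjoint groups of $k$ equal rows; the bookkeeping you worry about is harmless (even if some rows of $M$ coincided, the groups would merge and the resulting factor $\prod(c_ik)!$ is still divisible by $(k!)^{|V(G)|-1}$), and the paper itself asserts this factor in its graphic-interpretation section and uses it to prove Proposition~\ref{prime}. Your number theory is also fine: for odd $k\ge 3$ both $2$ and $(k+1)/2$ lie in $\{1,\dots,k\}$, so $(k+1)\mid(k!)^2$, and $|V(G)|>2$ supplies the two copies of $k!$. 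The paper instead derives Corollary~\ref{oddcase} from Proposition~\ref{specialchoice}: changing the special vertex negates the permanent mod $k+1$ when $k$ is odd, while changing it twice is a row permutation and preserves it, whence $\text{Perm}(M)\equiv-\text{Perm}(M)\pmod{k+1}$. Note that this only yields $2\,\text{Perm}(M)\equiv 0$, and $2$ is a zero divisor modulo the even number $k+1$, so the paper's argument by itself leaves the residue $(k+1)/2$ open; your factorial argument is what actually excludes it for $k\ge 3$.

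The genuine gap is $k=1$, and you should know that it cannot be ``checked by hand'': the statement fails there. For $k=1$ the hypothesis $|E(G)|=|V(G)|-1$ forces $G$ to be a tree, the $1$DSI matrix is just the reduced incidence matrix $M$, and $\det M=\pm1$ for a tree, so $\text{Perm}(M)\equiv\det M\equiv 1\pmod 2$; concretely, the path on three vertices gives $\text{Perm}(M)=\pm1\not\equiv 0\pmod 2$. So your method's refusal to cover $k=1$ is not a defect of the method but a detection of a real problem with the statement (and with the paper's own proof, which at this point silently divides by $2$ modulo $k+1$). The corollary should be read with $k\ge 3$; as you note, that is exactly the range your two-step argument covers completely.
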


\begin{proof} Consider $k$DSI matrix $M$ from graph $G$ where $v_1$ is the special vertex, and suppose $v_2,v_3 \in V(G) \setminus v_1$. Let $M_{v \rightarrow w}$ denote the matrix where the special vertex has been changed from $v$ to $w$, as in Proposition \ref{specialchoice}. If $k$ is odd, then, \begin{align*} \text{Perm}(M) &= -\text{Perm}(M_{v_1 \rightarrow v_2})= \text{Perm}\left( (M_{v_1 \rightarrow v_2})_{v_2 \rightarrow v_3} \right) \\ \text{Perm}(M) &= -\text{Perm}(M_{v_1 \rightarrow v_3}).\end{align*}  As $\left( (M_{v_1 \rightarrow v_2})_{v_2 \rightarrow v_3} \right)$ and $M_{v_1 \rightarrow v_3}$ differ only by interchanged rows, $$\text{Perm}\left( (M_{v_1 \rightarrow v_2})_{v_2 \rightarrow v_3} \right) = \text{Perm}(M_{v_1 \rightarrow v_3}),$$ and it follows that $\text{Perm}(M) = 0$.   \end{proof}

To understand why the restriction in the previous corollary that $|V(G)|>2$ is necessary, we require the following theorem.

\begin{wilson} A number $p$ is prime if and only if $(p-1)! \equiv -1\pmod{p}$. For a composite number $n>4$, $(n-1)! \equiv 0 \pmod{n}$. \end{wilson}

\begin{theorem}\label{specialinvariant} Suppose $G$ is a graph such that $|E(G)| = k(|V(G)|-1)$. With a fixed orientation to the edges, the permanent of the $k$DSI matrix of $G$ is invariant under choice of special vertex modulo $k+1$. \end{theorem}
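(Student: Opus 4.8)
The plan is to deduce the statement directly from Proposition~\ref{specialchoice} and Corollary~\ref{oddcase}, with only a small base case left to handle by hand. Proposition~\ref{specialchoice} already says that, with the edge orientation fixed, changing the special vertex multiplies the permanent of the $k$DSI matrix by $(-1)^k$ modulo $k+1$. So when $k$ is even there is nothing to prove: the factor is trivial and the permanent is literally unchanged modulo $k+1$. For $k$ odd the task reduces to showing $\text{Perm}(M)\equiv -\text{Perm}(M)\pmod{k+1}$ for any $k$DSI matrix $M$ of $G$, i.e.\ that $2\,\text{Perm}(M)\equiv 0\pmod{k+1}$.

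For $k$ odd and $|V(G)|>2$ this is immediate from Corollary~\ref{oddcase}, which gives $\text{Perm}(M)\equiv 0\pmod{k+1}$ outright, hence the same value ($0$) for every choice of special vertex. So the only case needing separate attention is $k$ odd with $|V(G)|\le 2$ --- exactly the range excluded in Corollary~\ref{oddcase}, and the reason Wilson's Theorem was stated. The case $|V(G)|=1$ is vacuous (no edges). In the case $|V(G)|=2$, connectedness and loop-freeness together with $|E(G)|=k(|V(G)|-1)=k$ force $G$ to be two vertices joined by $k$ parallel edges; deleting the special vertex's row from the $2\times k$ signed incidence matrix leaves a single row, so the $k$DSI matrix is a $k\times k$ matrix with $k$ identical rows. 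By Lemma~\ref{comrow} its permanent is divisible by $k!$, so $2\,\text{Perm}(M)$ is divisible by $2\cdot k!$, and it suffices to check $(k+1)\mid 2\cdot k!$.

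Finishing that divisibility is where Wilson's Theorem enters, and where the one genuine wrinkle lies. For $k=1$ it is trivial ($2\mid 2$). For odd $k\ge 3$ the number $k+1$ is even and at least $4$, hence composite; if moreover $k+1>4$ then Wilson's Theorem gives $k!\equiv 0\pmod{k+1}$, so certainly $(k+1)\mid 2\cdot k!$. The single remaining subcase is $k+1=4$, i.e.\ $k=3$ --- precisely the composite number that is the exception in Wilson's Theorem --- and here one simply computes $2\cdot 3! = 12 \equiv 0 \pmod 4$. Assembling these pieces yields the theorem. The main obstacle is therefore not really an obstacle once Proposition~\ref{specialchoice} and Corollary~\ref{oddcase} are available: it is just the two-vertex multigraph, which must be treated directly and for which the value $k=3$ must be split off from the Wilson argument.
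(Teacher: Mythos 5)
Your proposal is correct and follows essentially the same route as the paper: reduce via Proposition~\ref{specialchoice} and Corollary~\ref{oddcase} to the case of two vertices joined by $k$ parallel edges with $k$ odd, observe the permanent there is $\pm k!$ (a factor of $k!$ from the identical rows), and dispose of it with Wilson's Theorem after splitting off $k\in\{1,3\}$. The only cosmetic difference is that you phrase the final check as $(k+1)\mid 2\cdot k!$ while the paper verifies $\pm k!$ gives the same residue mod $k+1$; these are the same computation.
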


\begin{proof} By Proposition \ref{specialchoice} and Corollary \ref{oddcase}, the permanent is invariant modulo $k+1$ under choice of special vertex if $k$ is even or $k$ is odd and $|V(G)|>2$. As a graph with only a single vertex creates an empty matrix, it remains to be shown that the permanent is invariant if $|V(G)| = 2$ and $k+1$ is even.

 Suppose then that $G$ is a graph with two vertices and $k$ parallel edges. Applying an arbitrary orientation to the edges, the $k$DSI matrix of this graph has entirely nonzero entries, each column either all $1$ or all $-1$. Thus, it trivially has permanent $\pm k!$. As $k+1$ is even, it follows from Wilson's Theorem that $k! \equiv 0 \pmod{k+1}$ if $k+1 > 4$. Hence, we need only consider $k \in \{1,3\}$. As $\pm 1! \equiv 1 \pmod{2}$ and $\pm 3! \equiv 2 \pmod{4}$, the permanent is invariant under any choice made in constructing the $k$DSI matrix, as desired. \end{proof}

\begin{definition} Let $G$ be a graph with $|E(G)| = k(|V(G)|-1)$ for some integer $k$. Let $M$ be a $k$DSI matrix of $G$. The \emph{graph permanent} of $G$ is $\pm \text{Perm}(M) \pmod {k+1}$, which we will by convention take as a residue in $\left[0,\frac{k+1}{2}\right]$. \end{definition}

The fact that we must choose $\pm \text{Perm}(M)$ is the result of variability in the choice of underlying edge orientation. In changing the direction of an edge in the orientation, a column of the matrix is multiplied by negative one, and hence the permanent changes sign.

\section{Graphic Interpretation}

Consider a graph $G$ with $|E(G)| = k(|V(G)|-1)$ and an associated $k$DSI matrix $M$ with special vertex $v \in V(G)$. For each contribution to the permanent, precisely one non-zero value is selected from each row and similarly from each column. Fix such a contribution. Given the block structure that is used to create matrix $M$, we may associate each block with a unique colour. Then, each edge is selected once, and each non-special vertex $k$ times. Assign colour $c$ to an edge if the contribution uses a value in the associated column that is in the $c^\text{th}$ block. For each coloured edge, assign a tag on the edge close to the vertex that uses that edge in $M$. Such a contribution and colouring scheme on $K_4$ is given in Figure~\ref{markedmatrix}.

\begin{figure}[h]
  \centering
      \includegraphics[scale=.95]{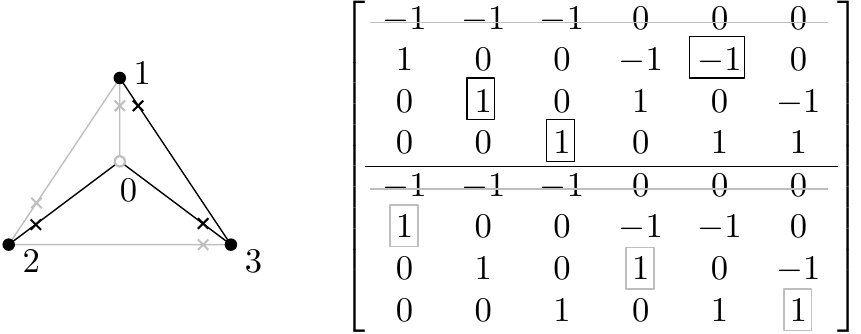}
  \caption{A 2DSI matrix of $K_4$ with special vertex $0$ and a contribution to the permanent.}
\label{markedmatrix}
\end{figure}

Note from this construction that the special vertex cannot receive a tag. All other vertices must receive precisely $k$ tags, one on an edge of each colour. In fact, an arrangement of edge tags and colours on $G$ that assigns each non-special vertex $k$ tags - one on an edge of each colour - and no tags to the special vertex can immediately be turned into a selection of non-zero entries in the $k$DSI matrix. 

\begin{rem} \label{tagcolourbij} There is a bijection between these assignments of tags and colours and the contributions to the permanent. \end{rem}

We will use this bijection in a number of proofs as a way of considering these contributions as a property of the graph itself.

\begin{rem} \label{ops} There are two operations on the tags and colours of the graph that produce other contributions to the permanent. The first is, for any non-special vertex, we may permute the colours of the $k$ edges that have a tag at that vertex. There are $k!$ ways to perform this permutation. The second, at its most intuitive, is that we may switch which vertex receives the tag on every edge in a cycle where all edges have the same colour. Since the colours of edges that have tags at a common vertex are all interchangeable, though, this may be restated as reversing the directions of the tags of a cycle $C$ in $G$ such that, in $C$, each vertex receives one tag. \end{rem}

Suppose then that the graph $G$ has $n$ vertices. From the colour permuting operation, each valid configuration of tags produces $(k!)^{n-1}$ valid colourings. As the tags determine the position in the original matrix that is selected, choice of edge colours does not affect the value of the contribution. As such, $(k!)^{n-1}$ is a factor in the permanent, and the colours do not matter.

\begin{prop} \label{prime} For non-prime $k+1$, the permanent of any square $k$DSI matrix associated to a graph $G$ with $|V(G)|>2$ is zero modulo $k+1$. \end{prop}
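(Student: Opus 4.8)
The plan is to split into two cases according to which composite value $k+1$ takes: the single exceptional case $k+1 = 4$ (i.e.\ $k = 3$), and all other cases $k + 1 > 4$ composite. The reason for the split is that Wilson's Theorem gives $k! \equiv 0 \pmod{k+1}$ precisely when $k+1$ is composite \emph{and} $k+1 > 4$, while for $k+1 = 4$ one only has $3! = 6 \equiv 2 \pmod 4$, so a different tool is needed there.

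For the main case, suppose $k+1 > 4$ is composite and let $M$ be any $k$DSI matrix of $G$. Since $|V(G)| > 2$, there is at least one non-special vertex, and the $k$ copies of its row of the underlying signed incidence matrix occur as $k$ equal rows of $M$ (one in each block); hence by Lemma~\ref{comrow} the integer $k!$ is a factor of $\text{Perm}(M)$. (Equivalently, one may quote the stronger fact from the discussion after Remark~\ref{ops} that $(k!)^{|V(G)|-1}$ is a factor of $\text{Perm}(M)$.) By Wilson's Theorem $k! \equiv 0 \pmod{k+1}$, so $\text{Perm}(M) \equiv 0 \pmod{k+1}$, as claimed.

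It remains to handle $k + 1 = 4$, i.e.\ $k = 3$. Here $k$ is odd and $|V(G)| > 2$, so Corollary~\ref{oddcase} applies verbatim and gives $\text{Perm}(M) \equiv 0 \pmod 4$. This finishes the proof. The only genuine subtlety — and the reason the hypothesis $|V(G)| > 2$ is indispensable rather than cosmetic — is exactly this $k = 3$ case: the two-vertex graph with three parallel edges has $3$DSI matrix of permanent $\pm 3! = \pm 6 \equiv 2 \pmod 4 \neq 0$, so one cannot conclude from the $k!$-divisibility argument alone and must instead invoke Corollary~\ref{oddcase}, which is where the restriction on $|V(G)|$ is actually consumed. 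Apart from isolating this case, the proposition follows immediately from results already established.
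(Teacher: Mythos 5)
Your proof is correct and uses the same ingredients as the paper's: Corollary~\ref{oddcase} to cover the one situation Wilson's Theorem cannot reach, and the factor of $k!$ in the permanent combined with Wilson's Theorem for composite $k+1>4$. The only (immaterial) differences are the case split — the paper first disposes of all even $k+1$ via Corollary~\ref{oddcase} and applies Wilson only to odd composite $k+1$, whereas you isolate $k+1=4$ as the sole exception — and your justification of the $k!$ factor via the $k$ equal rows and Lemma~\ref{comrow} rather than the colour-permutation count following Remark~\ref{ops}; both routes are valid.
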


\begin{proof} By Corollary~\ref{oddcase}, we may assume that $k+1$ is odd. If $k+1$ is composite and odd it must be greater than four, and by Wilson's Theorem $k! \equiv 0 \pmod{k+1}$. As $k!$ is a factor in the permanent, the result follows. \end{proof}

It is interesting to note, then, that the value of the permanent for a $k$DSI matrix is determined completely by the tag assignments.

\begin{prop} For a $k$DSI matrix, we may produce all contributions to the permanent from a single contribution and the two operations stated in Remark~\ref{ops}. \end{prop}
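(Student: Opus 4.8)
The plan is to work on the graph side. By Remark~\ref{tagcolourbij} a contribution to the permanent is the same datum as an assignment of tags and colours, so it is enough to show that the moves generated by the two operations of Remark~\ref{ops} act transitively on these assignments. The first step is to dispose of the colours. For a \emph{fixed} set of tags, a colouring is nothing but an independent choice, at each non-special vertex $u$, of a bijection from the $k$ edges tagged at $u$ onto the $k$ colours: there is no interaction between distinct vertices because every edge carries exactly one tag. Operation~1 at $u$ post-composes this bijection with an arbitrary element of $S_k$ and alters nothing else, so operation~1 alone already acts transitively on the colourings of any fixed tag configuration. Hence it suffices to prove that the underlying tag configurations (colours forgotten) are connected under operation~2 in its restated ``reversal'' form --- reversing the tag directions along a cycle $C$ of $G$ in which each vertex of $C$ receives exactly one tag of $C$.

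Next I would translate tag configurations into orientations of $G$ by orienting each edge towards the endpoint that carries its tag. The requirement that every non-special vertex own exactly $k$ edges and that the special vertex $w$ own none becomes $\mathrm{deg}^-(v)=k$ for every non-special $v$ and $\mathrm{deg}^-(w)=0$. Under this dictionary, operation~2 is exactly the reversal of a \emph{directed} cycle: the condition ``each vertex of $C$ receives one tag of $C$'' says precisely that within $C$ every vertex has in-degree $1$, i.e.\ that $C$ is a directed cycle, and reversing a directed cycle leaves every in-degree unchanged; moreover, by Remark~\ref{ops}, this move carries contributions to contributions. So the claim reduces to the statement that any two orientations of $G$ with the same in-degree sequence are joined by a sequence of directed-cycle reversals.

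To finish I would invoke the classical argument for that reduction. Given two such orientations $O$ and $O'$, let $D$ be the set of edges on which they disagree, carried with their $O$-orientation. Since $O$ and $O'$ agree off $D$ and have equal in-degree at every vertex, at each $v$ the number of $D$-edges entering $v$ in $O$ equals the number entering $v$ in $O'$, which is the number of $D$-edges leaving $v$ in $O$; thus $D$ is balanced (in-degree $=$ out-degree at every vertex) and therefore decomposes into edge-disjoint simple directed cycles $C_1,\dots,C_r$ (repeatedly extract a simple directed cycle and delete its edges). Reversing $C_1,\dots,C_r$ one at a time is legitimate --- the cycles are edge-disjoint and each reversal preserves all in-degrees, so each is a legal operation~2 --- and the net effect is to reverse exactly the edges of $D$, turning $O$ into $O'$. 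Note that $D$ contains no edge at $w$, since $O$ and $O'$ both orient such edges away from $w$, so each $C_i$ avoids the special vertex, as an operation~2 cycle must. Assembling the three steps: to reach an arbitrary contribution from a given one, first match the underlying tags by directed-cycle reversals, then match the colours by operation~1.

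The only delicate point is the bookkeeping of the translation --- verifying that operation~2 corresponds precisely to directed-cycle reversal and that every orientation met along the cycle decomposition is again a genuine tag configuration --- rather than any substantial combinatorics; the cycle-reversal connectivity lemma itself is standard and short.
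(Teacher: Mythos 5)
Your proof is correct and follows essentially the same route as the paper's (which is only a few lines long): reduce to tag placements via operation~1, observe that the symmetric difference of two valid tag configurations is balanced at every vertex and hence decomposes into directed cycles, and reverse these one at a time via operation~2. Your write-up simply makes explicit the details the paper leaves implicit, including the check that the intermediate configurations remain valid and that the cycles avoid the special vertex.
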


\begin{proof} By Remark~\ref{ops}, it is sufficient to show that all valid tag placements can be obtained. Starting from a fixed orientation, then, consider getting to another by switching which vertex receives the tag on a set of edges. Since each edge must receive $k$ tags, it is immediate that this selection of edges must be a collection of cycles. \end{proof}

\section{Invariance Under Period Preserving Operations}

We now begin to explore the invariance of the graph permanent under the graph operations that are known to preserve the Feynman period, as mentioned in the introduction. Since we are always restricted to graphs $G$ such that $|E(G)| = k(|V(G)|-1)$ for some integer $k$, we split this section into three subsections, corresponding to specific graph classes that are subsets of this larger class; decompleted $2k$-regular graphs, graphs where $|E(G)| = 2(|V(G)| -1)$, and finally the intersection of the previous two classes, decompleted $4$-regular graphs.

\subsection{Decompleted $2k$-regular graphs}

The following definition is generalized from the introduction.

 \begin{definition}\label{decompdef} For a regular graph $\Gamma$ and any $v \in V(\Gamma)$, the graph $G= \Gamma - v$ is a \emph{decompletion} of $\Gamma$, and $\Gamma$ is the unique \emph{completion} of $G$. \end{definition}

\begin{theorem} \label{bijection} Let $\Gamma$ be a $2k$-regular graph. For vertices $v,w \in V(\Gamma)$, let $M$ be the $k$DSI matrix of $\Gamma - v$ with respect to special vertex $w$, and $N$ the matrix from opposite deletion and special vertex. Then, $\text{Perm}(M) = \pm \text{Perm}(N)$. If $k$ is even and an edge orientation based on an Eulerian circuit in $\Gamma$ is used to construct $M$ and $N$, then $\text{Perm}(M) = \text{Perm}(N)$. \end{theorem}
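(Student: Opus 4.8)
The plan is to exhibit an explicit bijection between contributions to $\text{Perm}(M)$ and contributions to $\text{Perm}(N)$ that preserves the value of each contribution up to a global sign. Using the graphic interpretation of the previous section, a contribution to $\text{Perm}(M)$ is (after choosing colours, which contribute the harmless factor $(k!)^{|V(\Gamma)|-2}$) encoded by an orientation of the tags on $\Gamma - v$ in which the special vertex $w$ receives no tag and every other vertex receives exactly $k$ tags; equivalently, since each edge of $\Gamma - v$ carries exactly one tag, such a configuration is a choice, for each edge $e$ of $\Gamma-v$, of one endpoint of $e$ to ``own'' the tag, subject to $w$ owning nothing and every other vertex owning exactly $k$ of its $2k$ incident edges (the edges at $w$ having been deleted along with $v$). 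The key observation is that, in $\Gamma$ itself, ``owning $k$ of $2k$ incident edges'' is a self-complementary condition: the complementary assignment (each edge owned by its other endpoint) is again valid. So I would first reinterpret a tag configuration for $M$ as a half-integral data structure on all of $\Gamma$, then flip to the complement, then delete $w$ instead of $v$, checking that what remains is exactly a valid tag configuration for $N$.

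Concretely, here is the correspondence I would set up. Start from a tagged $\Gamma - v$ valid for special vertex $w$. The vertex $v$ is absent; re-insert it, and to each of the $2k$ edges of $\Gamma$ incident to $v$ assign its tag to the endpoint $v$ (this is forced, there being nothing else available once we also require $w$ to own nothing). Now in $\Gamma$, $v$ owns its full $2k$ edges, $w$ owns none, and every other vertex owns exactly $k$. Pass to the complementary ownership: every edge is now owned by its other endpoint. In the complement, $v$ owns $0$, $w$ owns $2k$, and every other vertex owns $2k - k = k$. Finally delete $w$: the edges at $w$ disappear, $v$ now owns none (so $v$ is legitimately the special vertex), and every remaining non-$v$ vertex still owns exactly $k$ of its surviving edges — which is precisely a valid tag configuration for the $k$DSI matrix $N$ of $\Gamma - w$ with special vertex $v$. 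This map is visibly an involution-like bijection (doing it twice returns the original), so it is a bijection between the two sets of contributions.

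It remains to track the sign. Each tag configuration, together with a fixed reference edge orientation, determines the value $\pm 1$ of its contribution as a product of matrix entries; I would show that the complementation step changes each of these $\pm 1$ values by a uniform sign depending only on $\Gamma$, $v$, $w$, and the reference orientations used for $M$ and $N$, not on the individual configuration. The cleanest way is to compare the two global contributions directly: summing over all configurations, $\text{Perm}(M)$ and $\text{Perm}(N)$ are permanents of matrices that, after the reinsert--complement--delete bookkeeping, differ by row and column permutations and by negating certain columns (one negation per edge whose reference orientation in the $M$-picture and the $N$-picture disagree), each of which multiplies the permanent by $\pm 1$ (Remark~\ref{rowops}). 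That gives $\text{Perm}(M) = \pm\text{Perm}(N)$ in general. For the refined claim, suppose $k$ is even and the reference orientations for both $M$ and $N$ come from a single fixed Eulerian circuit of $\Gamma$ (which exists since $\Gamma$ is $2k$-regular, hence Eulerian). Along the Eulerian circuit every vertex has in-degree $k$ and out-degree $k$; I would argue that with this choice the complementation of ownership corresponds, vertex by vertex, to swapping the ``in'' tags with the ``out'' tags, and that the induced sign is a product of $k!$-type factors and an overall parity that is even precisely because $k$ is even — so all signs cancel and $\text{Perm}(M) = \text{Perm}(N)$ exactly.

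The main obstacle I anticipate is the sign bookkeeping in the last paragraph: making precise how a column negation in one block propagates through the stacked block structure of a $k$DSI matrix (each column negation in the underlying incidence matrix becomes $k$ column negations, contributing $(-1)^k$, which is where evenness of $k$ enters), and verifying that the permutation relating the $M$-matrix and the complemented $N$-matrix is the same permutation in every block so that no extra sign sneaks in. The combinatorial bijection itself is routine once phrased in terms of edge ownership; it is the interaction between the $k$-fold duplication, the choice of Eulerian reference orientation, and the parity of $k$ that will require care.
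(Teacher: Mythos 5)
Your bijection is exactly the paper's: extend a tag configuration on $\Gamma-v$ to all of $\Gamma$ by giving $v$ the tags of its incident edges, flip the ownership of every edge, and delete $w$. That part is correct and is the heart of the argument. The problem is in your sign analysis, where you have misidentified the mechanism in two ways. First, the claim that $\operatorname{Perm}(M)$ and $\operatorname{Perm}(N)$ ``differ by row and column permutations and by negating certain columns'' cannot be right as stated: the columns of $M$ are indexed by $E(\Gamma-v)$ and those of $N$ by $E(\Gamma-w)$, and these edge sets are different (edges at $w$ appear only in $M$, edges at $v$ only in $N$), so there is no matrix-level identification. The sign comparison has to be made contribution-by-contribution under the bijection. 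Second, your proposed source of the factor $(-1)^k$ --- ``each column negation in the underlying incidence matrix becomes $k$ column negations'' --- is wrong because a $k$DSI matrix duplicates \emph{rows} (blocks stacked vertically), not columns; each edge is a single column, so flipping the tag on one edge changes the selected entry from $+1$ to $-1$ (or vice versa) exactly once, contributing a single factor of $-1$ to that contribution.

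The correct accounting, which is what the paper does, is: every edge not incident to $v$ or $w$ appears in both matrices and its tag flips under the bijection, giving one sign change per such edge, i.e.\ $k|V(\Gamma)|-4k+l$ sign changes where $l$ is the number of $vw$-edges. The edges incident to exactly one of $v,w$ appear in only one of the two matrices, and here the Eulerian reference orientation is what you need: since each vertex then has in-degree and out-degree $k$, the parity of the number of selected entries equal to $-1$ among these edges is controlled, and one checks it contributes an extra global sign change precisely when $l$ is odd. The total number of sign changes is therefore $k|V(\Gamma)|-4k+l$ plus $1$ if $l$ is odd, which depends only on $\Gamma$, $v$, $w$ --- giving $\operatorname{Perm}(M)=\pm\operatorname{Perm}(N)$ --- and which is even whenever $k$ is even because then $k|V(\Gamma)|$ is even and $l+(l\bmod 2)$ is always even. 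So your outline is salvageable and follows the paper's route, but the step you flagged as the anticipated obstacle is indeed where your argument currently fails, and the fix is the edge count above rather than anything about column duplication.
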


\begin{proof} By Remarks~\ref{tagcolourbij} and \ref{ops}, it suffices to find a bijection between contributions to the two permanents that is consistent in either maintaining or changing signs. Hence, consider an extension of the taggings from the decompleted graphs to $\Gamma$ by assuming the decompleted vertex received the tags of all edges incident to it. By this extension, the special vertex receives no tags, the decompletion vertex receives $2k$, and all other vertices receive $k$ tags. The bijection that arises naturally, then, is to switch which vertex receives a tag on every edge. By construction, this switches between extensions of contributions to $M$ and $N$.

We now consider the signs of the contributions. Consider a fixed orientation in $\Gamma$ from an Eulerian circuit, and use this to define the underlying orientation for $\Gamma-v$ and $\Gamma-w$. Switching the position of a tag on a single edge changes the sign of an entry of the contribution. Hence, each edge common to both $\Gamma-v$ and $\Gamma-w$ causes a sign change. Suppose now that there are $l$ edges  between $v$ and $w$ in $\Gamma$.  If $l$ is even, then an even number of changes are ignored in deleting $v$ or $w$, and hence this causes no additional sign change. If $l$ is odd, then $w$ in $\Gamma-v$ and $v$ in  $\Gamma-w$ will be incident with opposite parity number of tags that disagree with the underlying orientation, causing an additional sign change. In all cases, it was the structure of the graph, not that actual position of the tags, that determined sign changes between the two permanents. Hence, this bijection either preserves the signs of all contributions or changes all signs. Therefore, $\text{Perm}(M) = \pm \text{Perm}(N)$, as desired. 

If, in addition, $k$ is even, $|E(\Gamma)| = k|V(\Gamma)|$ is even. Again, supposing that edge $\{u,v\}$ occurs $l$ times, there are $k|V(\Gamma)| - 4k +l$ edges not incident with either $v$ or $w$ in $\Gamma$. No matter the parity of $l$, then, an even number of sign changes made are made in the bijection, and overall sign is preserved. \end{proof}

\begin{theorem}\label{invariance} For a fixed $2k$-regular graph $\Gamma$, the graph permanents of all possible decompletions of $\Gamma$ are equal. \end{theorem}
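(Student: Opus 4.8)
The plan is to assemble Theorems~\ref{specialinvariant} and~\ref{bijection}, which together already carry essentially all of the load. First I would check that the graph permanent of any decompletion $\Gamma - v$ is genuinely well defined: since a $2k$-regular graph $\Gamma$ has $|E(\Gamma)| = k|V(\Gamma)|$, we get $|E(\Gamma - v)| = k|V(\Gamma)| - 2k = k(|V(\Gamma - v)| - 1)$, so the associated $k$DSI matrix is square and the permanent makes sense. The remark following the definition of the graph permanent, together with Theorem~\ref{specialinvariant}, shows that its value modulo $k+1$ is independent of the underlying edge orientation and of the choice of special vertex, up to an overall sign that is absorbed once we fold the residue into $[0,(k+1)/2]$.

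Next, I would fix two decompletions $\Gamma - v$ and $\Gamma - w$; if $v = w$ there is nothing to prove, so assume $v \neq w$. Since $\Gamma$ is connected and $2k$-regular it is Eulerian, so I would fix an orientation of $E(\Gamma)$ arising from an Eulerian circuit and use the induced orientations on $\Gamma - v$ and $\Gamma - w$. Because $w \in V(\Gamma - v)$ and $v \in V(\Gamma - w)$, by Theorem~\ref{specialinvariant} I may compute the graph permanent of $\Gamma - v$ from the $k$DSI matrix $M$ with special vertex $w$, and the graph permanent of $\Gamma - w$ from the $k$DSI matrix $N$ with special vertex $v$ — this is precisely the pair of matrices to which Theorem~\ref{bijection} applies.

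Then Theorem~\ref{bijection} gives $\text{Perm}(M) = \pm\,\text{Perm}(N)$, hence $\text{Perm}(M) \equiv \pm\,\text{Perm}(N) \pmod{k+1}$, and after taking $\pm$ and reducing into $[0,(k+1)/2]$ the graph permanents of $\Gamma - v$ and $\Gamma - w$ coincide. Since $v$ and $w$ were arbitrary, all decompletions of $\Gamma$ share the same graph permanent. I do not anticipate a real obstacle: the substance is in Theorem~\ref{bijection} (the tag-switching bijection and its sign bookkeeping) and in Theorem~\ref{specialinvariant}. The only points needing a little care are verifying that the relevant $k$DSI matrices are square, and confirming that every sign ambiguity — from the orientation, from the special vertex, and from the bijection of Theorem~\ref{bijection} — is swallowed by the residue convention, so that the link $\Gamma - v \leftrightarrow \Gamma - w$ is an honest equality rather than merely an equality up to sign.
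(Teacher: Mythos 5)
Your proposal is correct and follows essentially the same route as the paper: both arguments combine Theorem~\ref{specialinvariant} (independence of the special vertex) with Theorem~\ref{bijection} (swapping the deleted and special vertices) and absorb all residual sign ambiguity into the $\pm$ convention in the definition of the graph permanent. Your version is simply a more explicit write-up of the same proof, with the squareness check and the choice of special vertices spelled out.
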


\begin{proof} By Theorem~\ref{specialinvariant}, the choice of special vertex does not affect the permanent up to sign modulo $k+1$. From Theorem~\ref{bijection} and the fact that changing the direction of an edge in the underlying orientation only affect the overall sign of the permanent, the choice of deleted and special vertex may be interchanged without affecting the permanent up to sign. Thus, any two vertices are interchangeable with any other two as the deleted and special vertex, only potentially changing overall sign modulo $k+1$. As the graph permanent from $k$DSI matrix $M$ is $\pm \text{Perm}(M) \pmod{k+1}$, this completes the proof. \end{proof}

The extension of edge taggings from decompleted graph $G$ to $2k$-regular completion $\Gamma$, as seen in Theorem~\ref{bijection} provides the framework for a more graph theoretic look at the taggings. As stated in the introduction, fix a \emph{reference orientation} on the edges of $\Gamma$. We say that an arbitrary orientation is \emph{odd} (\emph{even}) if it disagrees with the reference orientation on an odd (even) number of edges. For distinct vertices $s,t \in V(\Gamma)$, we call any orientation that has $\mathrm{deg}^+(s) = 2k = \mathrm{deg}^-(t)$ and $\mathrm{deg}^+(v) = k = \mathrm{deg}^-(v)$ for all $v \in V(\Gamma) \setminus \{s,t\}$ a \emph{$s$-to-$t$ orientation}. Let $E_{s,t}$ ($O_{s,t}$) denote the number of even (odd) $s$-to-$t$ orientations. The following theorem, mentioned in the introduction, can now be proved.

\begin{theorem}\label{agreementidentity} Let $\Gamma$ be a $2k$-regular graph and $(s,t)$, $(s',t')$ pairs of distinct vertices. Then, $$E_{s,t} - O_{s,t} \equiv E_{s',t'} - O_{s',t'}\pmod{k+1}.$$ \end{theorem}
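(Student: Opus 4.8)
The plan is to connect the quantity $E_{s,t} - O_{s,t}$ to the permanent of a $k$DSI matrix, and then invoke Theorem~\ref{invariance}. The key observation is the extension of edge taggings described just before Theorem~\ref{bijection}: a contribution to the permanent of the $k$DSI matrix of $\Gamma - v$ with special vertex $w$ corresponds, after extending the tagging by assigning to $v$ all tags of edges incident to it, to an assignment of $k$ tags to each vertex of $\Gamma$ other than $w$, $2k$ tags to $v$, and no tags to $w$. I would first reinterpret such a tagging as an orientation of $\Gamma$: direct each edge toward the vertex that does \emph{not} receive its tag (equivalently, away from the tagged endpoint). Under this correspondence the tag count at a vertex becomes $\mathrm{deg}^-$, so the extended tagging corresponds precisely to a $w$-to-$v$ orientation of $\Gamma$ in the sense defined above, with $s = w$ and $t = v$.

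Next I would track the sign. Using the reference orientation of $\Gamma$ to build the underlying edge orientation of the $k$DSI matrix (deleting the row of $w$, and viewing each edge of $\Gamma - v$), a single contribution picks one nonzero entry per row and column; its value is a product of $\pm 1$'s, one factor per edge recording whether the tag placement agrees or disagrees with the reference orientation, together with the fixed sign of the underlying permutation. The point is that up to a global sign depending only on $\Gamma$, $s$, $t$ (not on the particular orientation), the signed contribution equals $(-1)^{(\text{number of disagreements with the reference orientation})}$. Summing over all contributions and using the tag-permuting operation of Remark~\ref{ops}, which contributes the fixed factor $(k!)^{n-1}$, I would obtain
\[ \mathrm{Perm}(M) \equiv \pm (k!)^{|V(\Gamma)|-2}\,\bigl(E_{s,t} - O_{s,t}\bigr) \pmod{k+1}. \]
When $k+1$ is prime, $(k!)^{|V(\Gamma)|-2}$ is a unit mod $k+1$ by Wilson's Theorem, so $E_{s,t} - O_{s,t}$ is determined mod $k+1$ by $\mathrm{Perm}(M)$ up to the global sign; when $k+1$ is not prime both sides vanish mod $k+1$ by Proposition~\ref{prime} (using $|V(\Gamma)| > 2$; the two-vertex case being trivial or vacuous). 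In either case, Theorem~\ref{invariance} tells us $\mathrm{Perm}(M)$ is, up to sign mod $k+1$, independent of the choice of deleted vertex and special vertex, i.e. independent of $(s,t)$. Hence $E_{s,t} - O_{s,t}$ is independent of $(s,t)$ mod $k+1$ up to sign; since $E_{s,t} - O_{s,t} = E_{t,s} - O_{t,s}$ trivially (reversing all edges of an $s$-to-$t$ orientation gives a $t$-to-$s$ orientation, and this changes the number of disagreements by $|E(\Gamma)|$, which is even as $|E(\Gamma)| = k|V(\Gamma)|$ — so parity is preserved only when... ) one must be slightly careful, but in fact the global sign ambiguity is pinned down precisely by the parity analysis already carried out in Theorem~\ref{bijection} and Proposition~\ref{specialchoice}, giving the clean congruence $E_{s,t} - O_{s,t} \equiv E_{s',t'} - O_{s',t'} \pmod{k+1}$ with no sign.

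The main obstacle I expect is bookkeeping the global sign so that the final statement has no $\pm$: one needs that the sign discrepancies identified in Theorem~\ref{bijection} (from $l$ edges between the swapped vertices) and in Proposition~\ref{specialchoice} (from the number of blocks $k$) combine, together with the edge-reversal symmetry $E_{s,t} - O_{s,t} = E_{t,s} - O_{t,s}$, to cancel exactly. A cleaner route, which I would prefer, is to fix once and for all an Eulerian circuit of $\Gamma$ (which exists since $\Gamma$ is $2k$-regular and connected) as the reference orientation; then Theorem~\ref{bijection} gives $\mathrm{Perm}(M) = \mathrm{Perm}(N)$ on the nose when $k$ is even, and when $k$ is odd Corollary~\ref{oddcase} forces everything to be $0$ mod $k+1$ and the identity is trivial. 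With the Eulerian reference orientation the sign ambiguity disappears, and the congruence follows directly from Theorem~\ref{invariance} applied to the $k$DSI matrices for the pairs $(s,t)$ and $(s',t')$.
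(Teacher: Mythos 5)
Your strategy is essentially the paper's: identify extended taggings of the $k$DSI matrix of $\Gamma-t$ with special vertex $s$ with $s$-to-$t$ orientations of $\Gamma$, deduce $\text{Perm}(M)=\pm(k!)^{|V(\Gamma)|-2}\,(E_{s,t}-O_{s,t})$ with a controlled global sign when an Eulerian reference orientation is used, transfer the invariance statements (Proposition~\ref{specialchoice}, Theorem~\ref{bijection}, Theorem~\ref{invariance}) from the permanent to $E_{s,t}-O_{s,t}$, and finish by observing that reversing a reference edge negates both sides. You are in fact more careful than the paper in retaining the colour-permutation factor $(k!)^{|V(\Gamma)|-2}$; the paper's proof writes $\text{Perm}(M)=(-1)^k(E_{s,t}-O_{s,t})$ with no such factor. (A small internal inconsistency: you direct each edge away from its tagged endpoint but then assert the tag count is $\deg^-$ and that the special vertex plays the role of $s$; the consistent convention, matching the paper, is that each edge points \emph{toward} its tagged endpoint.)

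The genuine gap is in how you dispose of the cases where $k+1$ is composite, and of odd $k$. Proposition~\ref{prime} and Corollary~\ref{oddcase} tell you that the \emph{permanent} vanishes modulo $k+1$; they say nothing about $E_{s,t}-O_{s,t}$. By your own identity the permanent equals $\pm(k!)^{|V(\Gamma)|-2}(E_{s,t}-O_{s,t})$, and by Wilson's Theorem $(k!)^{|V(\Gamma)|-2}\equiv 0 \pmod{k+1}$ precisely when $k+1$ is composite and greater than $4$; in that regime the vanishing of the permanent is carried entirely by the factor $(k!)^{|V(\Gamma)|-2}$ and imposes no constraint whatever on $E_{s,t}-O_{s,t}$ modulo $k+1$. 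So ``both sides vanish by Proposition~\ref{prime}'' and ``Corollary~\ref{oddcase} forces everything to be $0$'' are non sequiturs: the permanent-based argument only yields the congruence when $k!$ is invertible modulo $k+1$, i.e.\ when $k+1$ is prime. The step is not merely unjustified but cannot be repaired along these lines: for the $6$-regular graph on three vertices $s,t,u$ with three parallel edges between each pair ($k=3$, $k+1=4$), each ordered pair admits exactly one $s$-to-$t$ orientation, and taking the reference orientation to be three directed triangles one computes $E_{s,t}-O_{s,t}=1$ while $E_{s,u}-O_{s,u}=-1$, and $1\not\equiv -1\pmod 4$. The paper's own proof obscures this point by omitting the $(k!)^{|V(\Gamma)|-2}$ factor; your more accurate bookkeeping exposes it, but your resolution of those cases does not close it.
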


\begin{proof} Consider first a contribution to the permanent of a $k$DSI matrix $M$ from the decompletion of $\Gamma$ created by deleting vertex $t$ and making $s$ special. With each element selection, we assign a tag that either agrees or disagrees with the reference orientation. If it agrees, we have selected a $1$ from $M$. Otherwise, we have selected a $-1$. As in Theorem~\ref{bijection}, we extend the tagging of our decompleted graph to $2k$-regular graph to create an $s$-to-$t$ orientation where $s$ is the special vertex and $t$ is the decompleted vertex. Assuming an underlying edge orientation from an Eulerian circuit, a $2k$-regular graph will disagree with an $s$-to-$t$ orientation on precisely $k$ edges incident to $t$. Hence $$\text{Perm}(M) = (-1)^k \left( E_{s,t} - O_{s,t}\right).$$

It follows from Theorem~\ref{bijection} that if $k$ is even, $E_{s,t} - O_{s,t} = E_{t,s} - O_{t,s}$. From this construction and Theorem~\ref{specialinvariant}, $E_{s,t} - O_{s,t} \equiv E_{s',t} - O_{s',t} \pmod{k+1}$ for any $s' \in V(\Gamma) \setminus t$. Hence, using an Eulerian circuit as an underlying orientation, $E_{s,t} - O_{s,t} \equiv E_{s',t'} - O_{s',t'}\pmod{k+1}.$

For odd $k$, it was noted in Corollary~\ref{oddcase} and the proof of Theorem~\ref{specialinvariant} that if $|V(\Gamma)| > 3$, or $|V(\Gamma)| = 3$ and $k>3$, the permanent of any decompletion is equal to zero modulo $k+1$, which completes the proof in this case. Finally, by Wilson's Theorem again, if $|V(\Gamma)| = 3$ and $k \in \{1,3\}$, the permanent of any $k$DSI matrix is invariant under sign.

We complete the proof by noting that in turning an Eulerian circuit into any other orientation, each change in the direction of an edge multiplies both sides of the equation by negative one. \end{proof}

\begin{prop}\label{schnetz} Consider two $2k$-regular graphs that differ by a Schnetz twist, seen in Figure~\ref{twist}. Decompletions of these graphs have equal graph permanents. \end{prop}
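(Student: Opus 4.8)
The plan is to exploit the local nature of the Schnetz twist together with the tag-and-colour bijection (Remark~\ref{tagcolourbij}) and the completion-invariance already established in Theorem~\ref{invariance}. Recall that the Schnetz twist operates on a $4$-vertex cut: the graph $\Gamma$ is partitioned along four vertices $\{a,b,c,d\}$ into two pieces $H_1$ and $H_2$, and the twist reconnects $H_2$ to the cut vertices via a different pairing (the twist in Figure~\ref{twist} swaps two of the four attachment points). Crucially, the edge sets of $H_1$ and of $H_2$ are unchanged; only the way $H_2$'s half-edges are glued to $\{a,b,c,d\}$ changes. By Theorem~\ref{invariance}, the graph permanent of a decompletion does not depend on which vertex is deleted, so I am free to delete a vertex lying inside $H_1$ (or to take the special vertex there), which means the entire combinatorial action of the twist takes place away from the special/deleted vertex.

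The key step is to set up a bijection between tag-and-colour configurations on a decompletion $G$ of $\Gamma$ and those on the corresponding decompletion $G'$ of $\Gamma'$, and to check that this bijection is sign-consistent (preserves all signs or reverses all signs, exactly as in the proof of Theorem~\ref{bijection}). Concretely: a tag-and-colour configuration assigns to each non-special vertex $k$ tagged incident edges, one per colour. Restricting such a configuration to the edges of $H_1$ and to the edges of $H_2$ gives, at each of the four cut vertices $a,b,c,d$, some number of tags coming from the $H_1$-side and some from the $H_2$-side, summing to $k$ (for the cut vertices that are non-special). The twist is essentially a relabelling of which cut vertex each $H_2$-half-edge attaches to; I want to argue that the multiset of "tag-demands" that $H_2$ places on the four cut vertices is preserved, so that an $H_2$-configuration valid for $\Gamma$ transplants to one valid for $\Gamma'$, and vice versa, giving a bijection. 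For the sign bookkeeping, I would fix the underlying edge orientation on $H_1$ and on $H_2$ to be literally the same before and after the twist (the twist does not create or destroy edges, so this makes sense), so that the agreement/disagreement pattern of tags with the reference orientation is unchanged edge-by-edge; any global sign discrepancy then comes only from a fixed permutation of the four attachment points and is therefore the same for every contribution — exactly the situation we need.

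The main obstacle I anticipate is verifying that the tag-demands really do transplant correctly — i.e.\ that the local constraint "each non-special vertex gets exactly $k$ tags, one per colour" is respected after the attachment points are permuted. The subtlety is that the twist pairs up the cut vertices in a specific way (see Figure~\ref{twist}), so I need the count of $H_2$-tags landing on each individual cut vertex to match what $\Gamma'$ requires at the image cut vertex; a priori the twist could force, say, two tags of the same colour onto one vertex. I expect this to come out because the degree condition ($2k$-regularity) forces the $H_1$-side and $H_2$-side tag counts at each cut vertex to be "complementary" in a rigid way, and the Schnetz twist is precisely the symmetry that permutes the cut vertices while preserving this complementarity; but pinning down the colour-bookkeeping at the four cut vertices is where the real work lies. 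A clean way to finesse it may be to first use the colour-permuting freedom (Remark~\ref{ops}) to normalise the colours at $a,b,c,d$ before applying the twist, reducing the problem to a statement purely about tags (directions), after which the bijection and the sign count proceed just as in Theorem~\ref{bijection}.
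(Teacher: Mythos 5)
You have the right general strategy --- a tag bijection between contributions before and after the twist, checked for sign consistency as in Theorem~\ref{bijection} --- but the proposal stops exactly at the step that constitutes the proof, and the version of the bijection you sketch does not work. A direct transplant of the $H_2$-configuration (each $H_2$-edge keeping its tag at the same end while its cut-end is reattached to the twisted partner vertex) does \emph{not} preserve the tag counts at the cut vertices: in the paper's notation, if $v_1$ receives $m$ tags from the $H_1$ side and the cut vertices $v_1,v_2$ each have $d_1$ edges into $H_1$, then $v_2$ carries $k-d_1+m$ tags on its $H_2$-edges, so after the twist a direct transplant hands $v_1$ a total of $m+(k-d_1+m)=k-d_1+2m$ tags, which equals $k$ only when $d_1=2m$. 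The missing idea is that the bijection must \emph{flip every tag on the $H_2$ side to the opposite endpoint} after the twist; the counting argument (using $2k$-regularity and the fact that twist-partners have equal degree into each side) then shows each cut vertex recovers exactly $k$ tags, each interior $H_2$-vertex goes from $k$ to $2k-k=k$, and the sign change is $(-1)^{|E(H_2)|}$, hence global. You flag this bookkeeping as ``where the real work lies'' and propose to finesse it by normalising colours, but the obstruction is in the tags, not the colours, so that finesse does not help.

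Your choice of where to put the special and deleted vertices also matters more than you suggest, and your choice is the wrong one. The paper takes the deleted and special vertices to be the cut vertices $v_4$ and $v_3$ (a twist-partner pair). With that choice the tags contributed by $H_1$ to the remaining cut pair $v_1,v_2$ are forced to sum to exactly $d_1$, which is what makes the flip come out to $k$ at each of $v_1,v_2$, and the special/deleted pair survives the flip precisely because they are each other's twist-partners (the special vertex's $H_2$-edges, all tagged away from it, become the deleted vertex's $H_2$-edges all tagged toward it, and vice versa). If instead you delete a vertex interior to $H_1$, the only constraint you get is that the $H_1$-tags at the four cut vertices sum to $d_1+d_3$; the two pair-sums are not individually controlled, and the flip argument fails for configurations where $m_1+m_2\neq d_1$. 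So the freedom granted by Theorem~\ref{invariance} should be used to move the deleted and special vertices \emph{onto} the cut, not away from it.
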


\begin{figure}[h]
  \centering
      \includegraphics[scale=0.80]{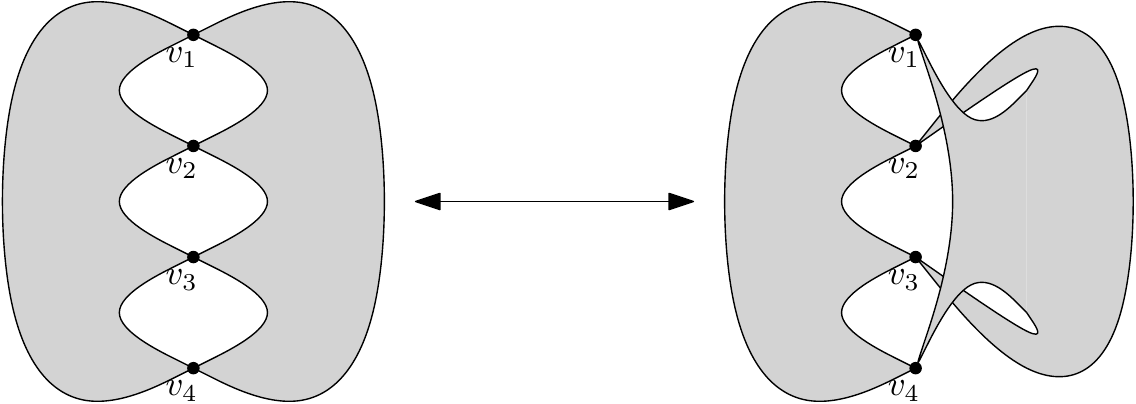}
  \caption{The Schnetz twist.}
\label{twist}
\end{figure}

\begin{proof} As the graph permanent is invariant under choice of special and decompleted vertex, in each graph choose vertex $v_4$ for deletion and $v_3$ as the special vertex. Using Remark~\ref{tagcolourbij} and as in the proof of Theorem~\ref{bijection}, we may capture all contributions to the permanent of the decompleted graph by treating the deleted vertex as a vertex that has received all tags from incident edges, and the special vertex as a vertex that has received none. This will allow for a natural bijection, to be defined below, between contributions after the twisting operation. As both graphs are assumed to be $2k$-regular, it must be the case that vertices $v_3$ and $v_4$ have an equal number of edges in the left side of the graph, say $d_3$, and an equal number of edges in the right side of the graph, which would then be $2k-d_3$. We may say the same for vertices $v_1$ and $v_2$, and denote these $d_1$ and $2k-d_1$.

Suppose then that the left portion of the graph has $n$ vertices contained properly inside. Then, there are $kn + d_1 + d_3$ edges on this side, and hence an equal number of tags. If vertex $v_1$ receives $m$ tags, then vertex $v_2$ must receive $$(kn + d_1 + d_3) - (kn + d_3 + m) = d_1 - m,$$ accounting for all tags on this side. Similarly, since all vertices receive $k$ tags, $v_1$ must receive $k-m$ tags on the right, and $v_2$ must receive $k-d_1 + m$.

After the twist, then, consider switching the sides of the tags on the right side only. Again, the deleted vertex receives $2k$ tags and the special vertex receives none. Further, vertex $v_1$ receives $m + ((2k-d_1) -(k+m-d_1)) = k$ tags, and similarly $v_2$ receives $(d_1-m)+((2k-d_1) - (k-m)) = k$ tags. Hence, this is a contribution to the permanent. This operation is clearly bijective and at most changes the overall sign of all contributions, and hence the graph permanents must be equal. \end{proof}

\subsection{Graphs $G$ where $|E(G)| = 2(|V(G)| -1)$}

\begin{prop}\label{dual} For a graph $G$ where $|E(G)| = 2(|V(G)| -1)$ and planar dual $G^*$, the graph permanents for $G$ and $G^*$ are equal. \end{prop}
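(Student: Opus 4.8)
The plan is to work modulo $k+1=3$ throughout, put the row-reduced incidence matrices of $G$ and $G^*$ into a common tree/cotree normal form over $\mathbb{Z}_3$, and then expand the permanent of the doubled block matrix as a sum over splittings of the columns into two halves. First I would record the consequences of Euler's formula: since $|E(G)|=2(|V(G)|-1)$ the dual satisfies $|V(G^*)|=|V(G)|=:n$ and $|E(G^*)|=|E(G)|=2n-2$, so both $2$DSI matrices are square of size $2n-2$ and live modulo $3$. Fix an orientation of $G$ and the standard induced orientation of $G^*$ (each dual edge being the primal edge rotated, say, counterclockwise). The point of this choice is that, reduced modulo $3$, the rows of the signed incidence matrix of $G^*$ are the face-boundary cycles of $G$; these span the $\mathbb{Z}_3$-cycle space of $G$, i.e.\ the nullspace over $\mathbb{Z}_3$ of the incidence matrix of $G$. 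So if $M,M^*$ denote row-reduced incidence matrices of $G,G^*$ (with arbitrarily chosen special vertices), then $\operatorname{rowspace}(M^*)=\operatorname{null}(M)$ over $\mathbb{Z}_3$.

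Next I would normalize. Choose a spanning tree $T$ of $G$ to serve as the pivot columns. By Corollary~\ref{reduction} the permanent of $\left[\begin{smallmatrix}M\\ M\end{smallmatrix}\right]$ modulo $3$ is unaffected by simultaneous row operations, and by Remark~\ref{rowops} it is unaffected by column permutations and by scaling a row in both blocks by $\pm1$ (a factor $(\pm1)^2=1$). Hence I may take $M=[\,I_T\mid N\,]$ with columns ordered $T$ then $\bar T:=E\setminus T$, where $N$ is an $(n-1)\times(n-1)$ matrix over $\mathbb{Z}_3$ with rows indexed by $T$ and columns by $\bar T$. The nullspace of $[\,I_T\mid N\,]$ is spanned by the vectors $x^{(e)}$, $e\in\bar T$, with $x^{(e)}_e=1$, $x^{(e)}_t=-N_{t,e}$ for $t\in T$, and $0$ elsewhere; since these also span $\operatorname{rowspace}(M^*)$, the same reductions put $M^*=[\,I_{\bar T}\mid -N^{\mathsf T}\,]$ with columns ordered $\bar T$ then $T$ (consistently, $\bar T$ turns out to be a spanning tree of $G^*$).

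Finally I would compute. For any $(n-1)\times(2n-2)$ matrix $A$, sorting permutations by which columns are used against the top copy versus the bottom copy gives $\text{Perm}\left[\begin{smallmatrix}A\\ A\end{smallmatrix}\right]=\sum_{S}\text{Perm}(A_S)\,\text{Perm}(A_{\bar S})$, the sum over all $(n-1)$-element column subsets $S$, with $A_S$ the corresponding square submatrix. With $A=[\,I_T\mid N\,]$ the identity columns collapse each term, and
\[\text{Perm}\left[\begin{smallmatrix}A\\ A\end{smallmatrix}\right]\;=\;\sum_{\substack{P\subseteq T,\ Q\subseteq\bar T\\ |P|+|Q|=n-1}}\text{Perm}(N[T\setminus P,\,Q])\,\text{Perm}(N[P,\,\bar T\setminus Q]).\]
Running the identical expansion for $A^*=[\,I_{\bar T}\mid -N^{\mathsf T}\,]$, and using that the permanent is invariant under transposition and picks up a factor $(-1)^{(\#\mathrm{rows})}$ when every entry of a square matrix is negated, one obtains the very same sum after the reindexing $P\mapsto T\setminus Q^{*}$, $Q\mapsto \bar T\setminus P^{*}$ — which one checks preserves the constraint $|P|+|Q|=n-1$ — up to the overall factor $(-1)^{|\bar T|}=(-1)^{n-1}$. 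Thus $\text{Perm}(\text{$2$DSI of }G^*)\equiv(-1)^{n-1}\,\text{Perm}(\text{$2$DSI of }G)\pmod 3$, and as the graph permanent is defined only up to sign, $G$ and $G^*$ have equal graph permanent.

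The step I expect to be the main obstacle is the second ingredient of the first paragraph: justifying $\operatorname{rowspace}(M^*)=\operatorname{null}(M)$ over $\mathbb{Z}_3$ cleanly. One needs that the face-boundary cycles of a connected planar graph reduce to a spanning set of its cycle space over $\mathbb{Z}_3$ (they do, being a $\mathbb{Z}$-basis of the integral cycle lattice), and one must dispose of the per-edge sign ambiguity inherent in the dual-orientation convention — but a different consistent convention merely negates some columns of the $2$DSI matrix, changing its permanent by $\pm1$, which is absorbed into the ``$\pm$'' in the definition of the graph permanent. Everything after that is bookkeeping: verifying the stacked-matrix permanent identity, collapsing the identity columns, and matching the two index sets.
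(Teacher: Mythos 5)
Your proof is correct and follows essentially the same route as the paper's: row-reduce the $2$DSI matrices to the tree/cotree standard forms $[\,I\mid N\,]$ and (up to column permutation) $[\,-N^{\mathsf T}\mid I\,]$, then compare permanents of the doubled matrices, the relation being $\text{Perm}(M_{G^*})\equiv(-1)^{n-1}\text{Perm}(M_G)\pmod 3$, which is absorbed by the sign ambiguity in the definition of the graph permanent. The only differences are presentational: you justify the dual standard form explicitly via the face-boundary/cycle-space identification (which the paper leaves implicit) and organize the computation as a term-by-term matching of Laplace expansions, whereas the paper uses the closed form $\text{Perm}\bigl[\begin{smallmatrix}I&A\\ I&A\end{smallmatrix}\bigr]=2^{|A|}\text{Perm}(A)$.
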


\begin{proof} By Remark~\ref{rowops} and Corollary~\ref{reduction} we may row perform row operations without affecting the graph permanent. After row reduction, we may write the 2DSI matrix $$M_G = \left[ \begin{array}{c|c} I & A \\ \hline I & A \end{array} \right].$$ The permanent of this matrix is $2^{|A|}\text{Perm}(A)$. Dually, we have 2DSI matrix $$M_{G^*} = \left[ \begin{array}{c|c} -A^t & I \\ \hline -A^t & I \end{array} \right],$$ with permanent $(-2)^{|A|} \text{Perm} (A)$.\end{proof}

\begin{lem}\label{pigeon} If a square block matrix has a non-square block that is the only block containing non-zero entries in its particular row and column set, the permanent is zero.  \end{lem}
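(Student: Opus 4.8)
The plan is to argue directly from the definition $\text{Perm}(B) = \sum_{\sigma \in S_n} \prod_{i=1}^n b_{i,\sigma(i)}$ and to show that \emph{no} permutation contributes, so that the defining sum is empty and the permanent is $0$. The underlying mechanism is a pigeonhole argument: any contributing permutation is forced to set up a bijection between the rows of the distinguished block and its columns, but a non-square block does not have equally many of each.

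Concretely, I would first fix notation. Write the square matrix as $B = (b_{i,j})$, with its row indices partitioned into the block row sets and its column indices partitioned into the block column sets, and let $R$ and $C$ be respectively the row set and the column set of the non-square block singled out in the statement, so that $|R| \neq |C|$. The hypothesis that this block is the only one containing non-zero entries in its row set and in its column set then translates into a two-sided vanishing condition: $b_{i,j} = 0$ whenever $i \in R$ and $j \notin C$, and $b_{i,j} = 0$ whenever $j \in C$ and $i \notin R$.

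The main step is then immediate. Suppose $\sigma \in S_n$ contributes, i.e.\ $b_{i,\sigma(i)} \neq 0$ for every $i$. For each $i \in R$ the entry $b_{i,\sigma(i)}$ is non-zero, so the first vanishing condition forces $\sigma(i) \in C$; hence $\sigma(R) \subseteq C$, and since $\sigma$ is injective, $|R| \le |C|$. Symmetrically, for each $j \in C$ the row $\sigma^{-1}(j)$ satisfies $b_{\sigma^{-1}(j), j} \neq 0$, so the second vanishing condition forces $\sigma^{-1}(j) \in R$; hence $C \subseteq \sigma(R)$ and $|C| \le |R|$. Therefore $|R| = |C|$, contradicting the non-squareness of the block. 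So no $\sigma$ contributes and $\text{Perm}(B) = 0$.

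I do not expect any genuine obstacle here; the argument is a one-line pigeonhole once the hypothesis is unpacked correctly. The only point requiring care is exactly that unpacking: one must read the hypothesis as the two-sided condition above (the rows of $R$ vanish outside the columns $C$, \emph{and} the columns of $C$ vanish outside the rows $R$). With only one of the two conditions one obtains merely an inequality between $|R|$ and $|C|$ rather than an equality — which still suffices when the block is non-square in the appropriate direction — but the two-sided reading yields the contradiction regardless of which dimension of the block is the larger, which is presumably why it is stated that way.
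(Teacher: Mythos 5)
Your proof is correct and takes exactly the route the paper intends: the paper's own proof of Lemma~\ref{pigeon} consists of the single sentence that the result ``follows immediately from the pigeonhole principle and the definition of the permanent,'' and your argument is precisely that pigeonhole applied to a contributing permutation, spelled out with the two-sided vanishing condition made explicit. No gap; you have simply written out the details the paper leaves to the reader.
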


\begin{proof} This follows immediately from the pigeonhole principle and the definition of the permanent. \end{proof}

\begin{theorem} \label{2vertexcut} Consider the graph $G$ and two minors $G_1$ and $G_2$ seen in Figure~\ref{2cut}. If for $G^* \in \{G,G_1,G_2\}$, $2|V(G^*)| = |E(G^*)|-2$, then the graph permanent of $G$ is equal to the product of the graph permanents of $G_1$ and $G_2$. \end{theorem}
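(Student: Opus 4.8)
The plan is to adapt the matrix-reduction argument of Proposition~\ref{dual}. Write $\{x,y\}$ for the $2$-vertex cut and let $H_1,H_2$ be the two sides of $G$ it determines, so that, matching the stated edge counts, the minors $G_1,G_2$ of Figure~\ref{2cut} are $H_1,H_2$ each with one new edge $f$ joining $x$ to $y$. Fix a single orientation of $E(G)$, use it on $G_1,G_2$ as well (orienting the copies of $f$ arbitrarily), and take $x$ as the special vertex throughout. Ordering the rows of the reduced incidence matrix $M$ of $G$ as (the vertices of $H_1$ other than $x,y$; then $y$; then the vertices of $H_2$ other than $x,y$) and the columns as $E(H_1)$ then $E(H_2)$, the matrix $M$ has the shape $\left[\begin{smallmatrix}A_1 & 0\\ c_1 & c_2\\ 0 & A_2\end{smallmatrix}\right]$, where $A_i$ is the (wide, non-square) $H_i$-block and $(c_1\mid c_2)$ is the row of the cut vertex $y$. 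Hence the $2$DSI matrix $M_G=\left[\begin{smallmatrix}M\\ M\end{smallmatrix}\right]$ is block diagonal along the column partition $E(H_1)\sqcup E(H_2)$ apart from the two identical rows $(c_1\mid c_2)$, which I will call the \emph{bridge rows}.

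Next I would apply a Laplace-type (cofactor) expansion of $\mathrm{Perm}(M_G)$ along the two bridge rows. A term of this expansion, indexed by a choice of two columns $\{p,q\}$, survives only if $p$ and $q$ lie in different sides: if both lay in one side, that side's remaining columns would outnumber its available non-bridge rows and the complementary minor would vanish by Lemma~\ref{pigeon}. When $p\in E(H_1)$ and $q\in E(H_2)$ the complementary minor is block diagonal, the $2\times2$ bridge factor is twice the product of the two bridge entries selected (because the two bridge rows coincide), and summing over all such $p,q$ and then re-collapsing the two sums by a permanent cofactor expansion along $c_1$ and along $c_2$ yields $\mathrm{Perm}(M_G)=2\,P_1P_2$, where $P_i$ is the permanent of the square matrix formed by stacking two copies of $A_i$ over one copy of $c_i$ on the columns $E(H_i)$.

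Running the identical expansion on the $2$DSI matrix of $G_i$, which after reordering is $\left[\begin{smallmatrix}A_i & 0\\ c_i & \pm1\\ A_i & 0\\ c_i & \pm1\end{smallmatrix}\right]$, but this time expanding along the column of the new edge $f$ (whose only nonzero entries are the $\pm1$'s in the two bridge rows), yields $\mathrm{Perm}(M_{G_i})=\pm2\,P_i$. Therefore $\mathrm{Perm}(M_G)=2P_1P_2\equiv\pm2\,\mathrm{Perm}(M_{G_1})\,\mathrm{Perm}(M_{G_2})\pmod 3$. Since $k=2$, the leading $\pm2$ is a unit modulo $3$, so $\mathrm{Perm}(M_G)\not\equiv0$ precisely when both $\mathrm{Perm}(M_{G_i})\not\equiv0$; as the graph permanent in this range is simply the residue of $\pm\mathrm{Perm}$ in $[0,\tfrac32]$ — that is, the indicator of non-vanishing modulo $3$ — the graph permanent of $G$ equals the product of those of $G_1$ and $G_2$ in every case.

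I expect the main obstacle to be the bookkeeping in the middle step: deducing cleanly from Lemma~\ref{pigeon} that no contribution mixes the two sides, and then tracking exactly the powers of $2$ and the orientation-induced signs through the two expansions so that the final congruence comes out right. The $\pm$ ambiguities and the choice of representative for the graph permanent are a minor nuisance, but they evaporate once one observes that every quantity introduced along the way is a unit modulo $3$.
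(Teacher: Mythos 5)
Your proof is correct and takes essentially the same route as the paper's: the same block structure of the $2$DSI matrices with the cut-vertex row as the only coupling between the two sides, the same cofactor expansions (along the two copies of that row for $G$, and along the new-edge column for $G_1$ and $G_2$), the same appeal to Lemma~\ref{pigeon} to kill the mixed terms, and the same final unit-modulo-$3$ bookkeeping. The only differences are cosmetic (your choice of a cut vertex as the special vertex, and leaving the orientation of the new edge $f$ unfixed).
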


\begin{figure}[h]
  \centering
      \includegraphics[scale=1.20]{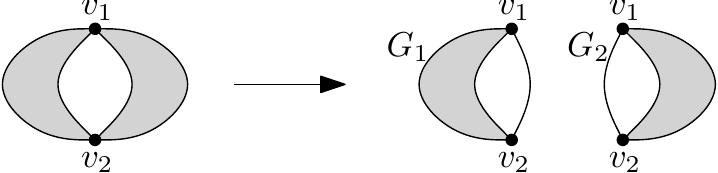}
  \caption{Operation on a two vertex cut.}
\label{2cut}
\end{figure}

\begin{proof} Let $G$ be the original graph, and $G_1$ and $G_2$ the minors as seen in Figure~\ref{2cut}. Maintain a constant edge orientation across all graphs, with the edge $\{v_1,v_2\}$ oriented towards $v_2$. Choosing $v_2$ as the special vertex for all graphs, the respective $2$DSI matrices are
 $$M_G = \left[ \begin{array}{ccc|ccc}
& \bf{\bf{G_1}} & & & \bf{0}& \\ 
&C& && D& \\ 
 & \bf{0} & & & \bf{\bf{G_2}} & \\ \hline
& \bf{\bf{G_1}} & & & \bf{0}& \\  
&C& &&D& \\ 
 & \bf{0} & & & \bf{\bf{G_2}} & \\  \end{array} \right], $$  

 $$M_{G_1} = \left[ \begin{array}{ccc|c}
& \bf{G_1}& & \bf{0} \\  
&C& & 1 \\ \hline
& \bf{\bf{G_1}} & & \bf{0} \\ 
&C& & 1
 \end{array} \right], $$ and 

 $$M_{G_2} = \left[ \begin{array}{c|ccc}
\bf{0} && \bf{\bf{G_2}} &  \\ 
1 &&D&  \\ \hline
\bf{0} && \bf{\bf{G_2}} &  \\  
1&&D& 
 \end{array} \right], $$ where $(C|D)$ is the row corresponding to vertex $v_1$.

Using cofactor expansion along the last column, we get $$\text{Perm}(M_{G_1}) =  \text{Perm}\left[ \begin{array}{c} \bf{G_1} \\  \bf{G_1} \\  C \end{array} \right] + \text{Perm}\left[ \begin{array}{c} \bf{G_1} \\  C \\  \bf{G_1} \end{array} \right] =  2  \text{Perm}\left[ \begin{array}{c} \bf{G_1} \\  \bf{G_1} \\  C \end{array} \right] .$$ Similarly, $$ \text{Perm}(M_{G_2}) = 2  \text{Perm} \left[ \begin{array}{c} \bf{G_2} \\ \bf{G_2} \\ D  \end{array}  \right].$$  Again, use $N_r$ to denote matrix $N$ with column $r$ deleted. Letting $C = (c_1,c_2,...)$ and $D=(d_1,d_2,...)$, we use cofactor expansion along the bottom row to get, \begin{align*}  \text{Perm}(M_{G_1}) &= 2 \sum_i c_i  \text{Perm} \left[ \begin{array}{c} \bf{G_1} \\ \bf{G_1} \end{array} \right]_i, \\ \text{Perm}(M_{G_2}) &= 2 \sum_i d_i  \text{Perm} \left[ \begin{array}{c} \bf{G_2} \\ \bf{G_2} \end{array} \right]_i. \end{align*}

We now compute $\text{Perm}(M_G)$ by cofactor expansion along the two rows $(C|D)$. Suppose the block $G_1$ contains $n$ columns and block $G_2$ contains $m$ columns. Blocks will be square only if in the expansion we have deleted one column in the first $n$ columns and one in the last $m$. By Lemma~\ref{pigeon}, then, \begin{align*} \text{Perm}(M_G) &= 2 \sum_{\substack{1 \leq i \leq n \\ n < j \leq n+m}} c_i d_j \text{Perm} \left[ \begin{array}{c|c} \begin{array}{c} \bf{G_1} \\ \bf{G_1} \end{array} & \bf{0}\\ \hline \bf{0} & \begin{array}{c} \bf{G_2} \\ \bf{G_2} \end{array} \end{array} \right]_{i,j} \\ &= 2 \sum_{\substack{ 1\leq i \leq n \\ 1 \leq j \leq m}} c_i d_j \text{Perm} \left[ \begin{array}{c} \bf{G_1} \\ \bf{G_1} \end{array} \right]_i   \text{Perm} \left[ \begin{array}{c} \bf{G_2} \\ \bf{G_2} \end{array} \right]_j \\ & \equiv - \text{Perm}(M_{G_1}) \text{Perm}(M_{G_2}) \pmod{3}.\end{align*} Hence, the graph permanent of $G$ is equal to the product of graph permanents of $G_1$ and $G_2$.  
\end{proof}

\subsection{Decompleted $4$-regular graphs}

\begin{corollary}\label{3cut}[to Theorem~\ref{2vertexcut}] With 4-regular graphs $\Gamma$, $\Gamma_1$, and $\Gamma_2$ as in Figure~\ref{3vertexcut}, the graph permanents of any decompletion of $\Gamma$ is equal to the product of graph permanents of decompletions of $\Gamma_1$ and $\Gamma_2$. \end{corollary}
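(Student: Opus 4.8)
The plan is to reduce the $3$-vertex cut situation to the $2$-vertex cut situation already handled by Theorem~\ref{2vertexcut}, using decompletion invariance to choose which vertex to delete. The point is that a $3$-vertex cut of a $4$-regular graph becomes a $2$-vertex cut after decompleting at one of the cut vertices.

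First I would invoke Theorem~\ref{invariance}: since the graph permanent of a decompletion of $\Gamma$ does not depend on the deleted vertex, I may decomplete $\Gamma$ at a cut vertex. If $\{v_1,v_2,v_3\}$ is the $3$-cut pictured in Figure~\ref{3vertexcut}, set $G := \Gamma - v_3$. Because $G - \{v_1,v_2\} = \Gamma - \{v_1,v_2,v_3\}$ is disconnected, $\{v_1,v_2\}$ is a $2$-vertex cut of $G$, and $G$ inherits the edge count $|E(G)| = 2(|V(G)|-1)$ of a decompleted $4$-regular graph, so $G$ lies in the class treated in the previous subsection. Likewise I would decomplete $\Gamma_1$ and $\Gamma_2$ at the vertices adjoined to the two sides of the cut when forming them, obtaining graphs $G_1$ and $G_2$; again by Theorem~\ref{invariance} the graph permanents of $G$, $G_1$, $G_2$ coincide with those of arbitrary decompletions of $\Gamma$, $\Gamma_1$, $\Gamma_2$ respectively.

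The key step is then a picture-chase: one checks that $G$, equipped with the $2$-cut $\{v_1,v_2\}$, together with $G_1$ and $G_2$, is exactly the configuration of Figure~\ref{2cut} (with the edge $\{v_1,v_2\}$ adjoined to each side, and with the degree bookkeeping that produces the single entry $1$ in the matrices $M_{G_1}$, $M_{G_2}$ appearing in the proof of Theorem~\ref{2vertexcut}), and in particular that $|E(G_i)| = 2(|V(G_i)|-1)$ for $i=1,2$, so that the relevant $2$DSI matrices and the modulus $3$ are the correct ones. Granting this, $G$, $G_1$, $G_2$ satisfy the hypotheses of Theorem~\ref{2vertexcut}, which gives that the graph permanent of $G$ is the product of those of $G_1$ and $G_2$ modulo $3$; the choice of special vertex is irrelevant throughout by Theorem~\ref{specialinvariant}. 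Translating back through Theorem~\ref{invariance} yields the corollary.

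I expect the picture-chase to be the only real obstacle: verifying that the $4$-regular construction of $\Gamma_1$, $\Gamma_2$ from the two sides of the $3$-cut in Figure~\ref{3vertexcut}, after the decompletions dictated above, literally reproduces the minors $G_1$, $G_2$ of Figure~\ref{2cut} applied to $G = \Gamma - v_3$. Everything else — the existence of the induced $2$-cut, the edge counts, and the reduction modulo $3$ — is routine, since the substantive content is entirely contained in Theorem~\ref{2vertexcut} and Theorem~\ref{invariance}.
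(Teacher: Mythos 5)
Your proposal is correct and follows essentially the same route as the paper: the published proof likewise invokes Theorem~\ref{invariance} to decomplete $\Gamma$, $\Gamma_1$, $\Gamma_2$ at the designated (hollow-circle) vertices so that the resulting graphs are exactly the configuration of Figure~\ref{2cut}, and then applies Theorem~\ref{2vertexcut}. Your version just spells out the picture-chase and edge-count bookkeeping that the paper leaves implicit.
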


\begin{figure}[h]
  \centering
      \includegraphics[scale=1.10]{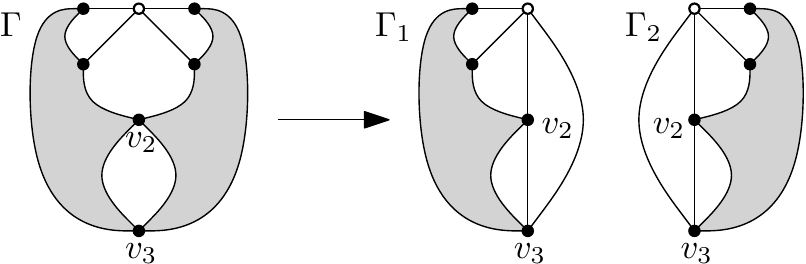}
  \caption{The completed graph with three vertex cut, corresponding to the completion of the graphs in Figure~\ref{2cut}.}
\label{3vertexcut}
\end{figure}

\begin{proof} By Theorem~\ref{invariance}, graph permanent of a decompleted 4-regular graphs is invariant under choice of decompletion vertex. Hence, decompleting each at the vertex labeled with a hollow circle, we produce graphs as in Theorem~\ref{2vertexcut}, and the result follows.  \end{proof}

\begin{theorem}\label{4edgecut} Let graphs $\Gamma$, $\Gamma_1$, and $\Gamma_2$ in Figure~\ref{4edgecutpic} be 4-regular. Construct $2$DSI matrices by deleting the vertices labeled with a hollow circle and making the vertices labeled with a square the special vertex, all distinct from vertices incident with the edges in the 4-edge cut. The permanent of the $2$DSI matrix associated to $\Gamma$ is equal to the product of the permanents associated to $2$DSI matrices of $\Gamma_1$ and $\Gamma_2$.  \end{theorem}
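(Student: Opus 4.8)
Proof proposal (plan).

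The plan is to follow the template of Theorem~\ref{2vertexcut}, with the four columns coming from the edges of the $4$-edge cut playing the role that the single "connecting" row of a $2$-vertex cut played there. First I would fix notation matching Figure~\ref{4edgecutpic}: let $L$ and $R$ be the two sides of the cut in $\Gamma$, let $a_1,\dots,a_4$ (resp.\ $b_1,\dots,b_4$) be the endpoints in $L$ (resp.\ $R$) of the four cut edges, and let $u$ (resp.\ $w$) be the new vertex by which $L$ (resp.\ $R$) is completed to the $4$-regular graph $\Gamma_1$ (resp.\ $\Gamma_2$); the hollow/square vertices are $h\in L$ (deleted in both $\Gamma$ and $\Gamma_1$) and $s\in R$ (special in both $\Gamma$ and $\Gamma_2$), with $h,s$ distinct from all $a_i,b_j$, and with $u$ special in $\Gamma_1$, $w$ deleted in $\Gamma_2$. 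I would orient every edge of $\Gamma$, orient the cut edges from $L$ to $R$, orient $a_iu$ toward $u$ in $\Gamma_1$, and leave $b_iw$ arbitrary in $\Gamma_2$. With these choices I would verify that the $2$DSI matrix of $\Gamma$ has the block form
\[
M_\Gamma=\begin{pmatrix} A & F_L & 0\\ 0 & F_R & B\\ A & F_L & 0\\ 0 & F_R & B\end{pmatrix},
\]
where the columns split into the $L$-internal edges ($A$), the four cut edges ($F_L$ on the $L$-rows, $F_R$ on the $R$-rows), and the $R$-internal edges ($B$); that $\begin{pmatrix} A & F_L\\ A & F_L\end{pmatrix}$ is literally the $2$DSI matrix $M_{\Gamma_1}$ (the deleted row $u$ reduces each cut-edge column to a single entry at $a_i$, just as for $F_L$); and that $\begin{pmatrix} B\\ B\end{pmatrix}$ is literally $M_{\Gamma_2}$, the $2$DSI matrix of $\Gamma_2-w$ with special vertex $s$. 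A short count using $4$-regularity and the fact that the cut has exactly four edges shows all three matrices are square (e.g.\ $A$ is $(p-1)\times(2p-6)$ where $p=|V_L|$, and $F_L$ supplies the missing four columns).

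Next I would compute $\text{Perm}(M_\Gamma)$ by iterated cofactor expansion along the four columns $F_L\mid F_R$ (recall from the computation in Lemma~\ref{redlem} that the permanent expansion carries no alternating signs). Each resulting term chooses, for every cut edge $a_ib_i$, one of the four rows in which its column is nonzero: one of the two copies of the row of $a_i$, or one of the two copies of the row of $b_i$. This is exactly where Lemma~\ref{pigeon} enters: in the complementary minor the $A$-columns are supported only on the surviving copies of $L$-rows and the $B$-columns only on the surviving copies of $R$-rows, and the block sizes force all four chosen rows to lie in the $L$-blocks — if even one $R$-row were chosen, there would be one too few $R$-rows left to cover the $B$-columns, so that term vanishes. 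For a surviving term the complementary minor is block diagonal, hence its permanent is $\text{Perm}(\text{$A$-minor})\cdot\text{Perm}\begin{pmatrix} B\\ B\end{pmatrix}$, and the factor $\text{Perm}\begin{pmatrix} B\\ B\end{pmatrix}=\text{Perm}(M_{\Gamma_2})$ does not depend on which rows were chosen. Pulling it out of the sum, what remains is precisely the cofactor expansion of $\text{Perm}\begin{pmatrix} A & F_L\\ A & F_L\end{pmatrix}=\text{Perm}(M_{\Gamma_1})$ along its four $F_L$-columns (the entries picked and the complementary $A$-minors agree term by term), whence $\text{Perm}(M_\Gamma)=\text{Perm}(M_{\Gamma_1})\,\text{Perm}(M_{\Gamma_2})$.

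The step I expect to cost the most work is the bookkeeping of the first paragraph: reading off from Figure~\ref{4edgecutpic} exactly which vertices are deleted and made special in $\Gamma_1$ and $\Gamma_2$, and choosing the underlying edge orientations consistently so that the two submatrices of $M_\Gamma$ are genuinely $M_{\Gamma_1}$ and $M_{\Gamma_2}$ on the nose (rather than equal to them only after row/column permutations and a global sign). Since a $2$DSI permanent is in any case only defined up to sign — changing the direction of an edge flips a column — any residual discrepancy is a harmless overall sign, but it should be tracked; note also that completion invariance (Theorems~\ref{specialinvariant} and~\ref{invariance}) lets one replace the vertex choices on $\Gamma_1,\Gamma_2$ by the convenient ones above if the figure's choices differ. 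The pigeonhole step itself is short, and it is exactly the point at which "$4$-edge cut'' and "$4$-regular'' (i.e.\ $k=2$: two stacked blocks and exactly four connecting columns) are both used.
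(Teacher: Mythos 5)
Your proposal is correct and follows essentially the same route as the paper: isolate the four cut-edge columns in the block form of the $2$DSI matrix, cofactor-expand along them, invoke Lemma~\ref{pigeon} to discard every term that selects a row from the wrong side, and factor the surviving block-diagonal minor into the two smaller permanents. The only cosmetic difference is that the paper evaluates the $2^4$ surviving terms as a common factor $16\cdot\mathrm{Perm}$ of a reduced block (and shows $\mathrm{Perm}(M_{\Gamma_1})$ equals the same quantity), whereas you match the expansion of $M_\Gamma$ term-by-term with the cofactor expansion of $M_{\Gamma_1}$ along its cut columns.
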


\begin{figure}[h]
  \centering
      \includegraphics[scale=1.30]{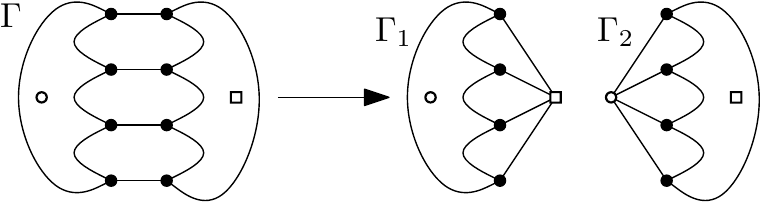}
  \caption{The graph permanent reduction for a graph with a 4-edge cut.}
\label{4edgecutpic}
\end{figure}

\begin{proof} Graph $\Gamma$ has associated $2$DSI matrix $$M_{G} = \left[ \begin{array}{c|cc}
I_4 & A & \bf{0} \\
\bf{0} & B & \bf{0} \\
-I_4 & \bf{0} & C \\
\bf{0} & \bf{0} & D \\ \hline
I_4 & A & \bf{0} \\
\bf{0} & B & \bf{0} \\
-I_4 & \bf{0} & C \\
\bf{0} & \bf{0} & D \end{array} \right], $$ where blocks $A$ and $B$ correspond to the left side of the graph, and blocks $C$ and $D$ correspond to the right. Supposing there are $l$ edges in the left subgraph and $k$ edges in the right, then block $B$ consists of $\frac{l-4}{2}$ rows and block $D$ has $\frac{k-8}{2}$ rows.

Consider cofactor expansion along the first four columns. By Lemma~\ref{pigeon} only matrix minors where the deleted row meets $A$ produce a non-zero permanent. Given that each column has two potential values, and since we may interchange rows freely, $$\text{Perm}(M_{G}) = 2^4 \cdot \text{Perm} \left[ \begin{array}{c|c}
A & \bf{0} \\
 B & \bf{0} \\
\bf{0} & C \\
\bf{0} & D \\ \hline
B & \bf{0} \\
\bf{0} & C \\\bf{0} & D \end{array} \right] = 16 \cdot \text{Perm} \left[ \begin{array}{c} B\\A\\B \end{array} \right] \cdot \text{Perm} \left[ \begin{array}{c} C\\D\\C\\D \end{array} \right].$$  Notice that $$\text{Perm}(M_{G_1}) = \text{Perm} \left[ \begin{array}{c|c} I_4 & A \\ \bf{0} & B \\ \hline I_4 & A \\ \bf{0} & B \end{array} \right] = 16 \cdot \text{Perm} \left[ \begin{array}{c} B\\A\\B \end{array} \right] ,$$ and $$\text{Perm}(M_{G_2}) = \text{Perm} \left[ \begin{array}{c}C\\D\\C\\D \end{array} \right].$$ This completes the proof.  \end{proof}

\subsection{Relation to Feynman periods}
\label{relationtofeynman}

Let $\Gamma$ be a 4-regular graph and let $G=\Gamma - v$ for some $v\in V(\Gamma)$. 
Once again we are thinking of $G$ as a Feynman diagram, specifically as a 4-point graph in scalar $\phi^4$ theory.  The vertices where $G$ used to have edges connecting to $v$ are the \emph{external edges} of $G$.  If we think of $G$ representing some particle interactions, then the external edges represent the four particles coming into or out of the process.
  
To each edge $e$ of $G$ associate a variable $a_e$ and define the \emph{period} of $G$ to be $$\int_{a_i \geq 0} \frac{\Omega}{\Psi^2}, $$ where 
$$\Omega = \sum_{i=1}^{|E(G)|}(-1)^i \prod_{\substack{j=1 \\ j \neq i}}^{|E(G)|}da_j, \hspace{6mm}
\Psi = \sum_{\substack{T \\\text{spanning} \\ \text{tree of }G}}\prod_{e\not\in E(T)}a_e.$$

Provided $K$ is internally 6-edge-connected the period integral converges \cite{bek}.  If $K$ has an internal 4- or 2-edge-cut then we say $G$ has a \emph{subdivergence}.  If $G$ has no subdivergences then we say it is \emph{primitive}.  In quantum field theory, graphs with subdivergences are more complicated because the subdivergences need to be dealt with first in the renormalization process (in this language see \cite{BrKr}).  The period of a primitive graph is renormalization-scheme independent, but is still an informative part of the full Feynman integral. 

Periods of 4-point graphs in $\phi^4$ are preserved by the three operations we have been looking at throughout this paper, namely planar dual, completion followed by decompletion, and Schnetz twist \cite{bkphi4, Sphi4}.  These identities of periods explain all known cases where two primitive 4-point graphs in $\phi^4$ have the same period (see \cite{Sphi4}).  Furthermore if $\Gamma$ has a 3-vertex cut then the period of any decompletion of $\Gamma$ is equal to the product of the periods of graphs $G_1$ and $G_2$ as seen in Figure~\ref{2cut}. Thus, having found a graph invariant which is preserved by the three operations and with the appropriate product property, namely the graph permanent, the following conjecture is natural.

\begin{conjecture} Suppose $G_1$ and $G_2$ are two primitive 4-point graphs in scalar $\phi^4$ theory. If $G_1$ and $G_2$ have equal periods, then they have equal graph permanents. \end{conjecture}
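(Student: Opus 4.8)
The plan is to attack this not by a direct comparison of the period integral with the graph permanent---these objects live in very different worlds, a real (typically transcendental) number on one side and a residue modulo $k+1$ on the other---but by reducing the statement to the structure of period coincidences that is already understood. The first step is to record what the present paper has accomplished: the graph permanent is invariant under planar duality (Proposition~\ref{dual}), under completion followed by decompletion (Theorem~\ref{invariance}), under the Schnetz twist (Proposition~\ref{schnetz}), and it behaves multiplicatively on $3$-vertex cuts (Corollary~\ref{3cut})---exactly the list of operations known to preserve the Feynman period. Consequently the graph permanent is constant on each equivalence class of primitive $4$-point graphs generated by these moves.

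The second step invokes the working hypothesis, due to Schnetz \cite{Sphi4}, that these operations in fact account for every known coincidence of periods among primitive $4$-point graphs in $\phi^4$. Under that hypothesis the Conjecture is immediate from the invariances just cited. To make the argument unconditional one would have to prove the much stronger statement that equality of periods forces two graphs to be related by a sequence of dualities, completions/decompletions and twists; I expect this to be the principal obstacle, and it is wide open---there is at present no structural characterization of when two graphs share a period, and one cannot rule out ``accidental'' coincidences invisible to the known moves. Even isolating a candidate new period-preserving operation and checking graph-permanent invariance for it (as was done here for the twist) would be progress, but would not close the gap.

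A more self-contained line of attack, which I would pursue in parallel, is to look for a bridge through the $c_2$ invariant and the flow/Alon--Tarsi interpretation sketched in Section~\ref{flow_sec}: the graph permanent modulo $k+1$ is, via the permanent method, a point-count-type quantity attached to the Kirchhoff data, and the $c_2$ invariant is conjecturally period-preserving, with duality known under a subgraph condition \cite{Dor} and the $3$-cut case following from decompletion \cite{BrS}. Establishing a precise relationship---even an implication in one direction---between the vanishing of the graph permanent and the reductions of $c_2$ modulo small primes would let one transfer partial results. Either way, the hard part is the same: producing, from the mere equality of two integrals, the combinatorial witness (a chain of graph moves, or a matching arithmetic identity) that the graph permanent can actually detect.
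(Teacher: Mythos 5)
The statement you were asked about is a \emph{conjecture} in the paper, not a theorem: the authors offer no proof, and your proposal (quite correctly) does not supply one either. Your first two paragraphs reconstruct exactly the motivation the paper itself gives in Section~\ref{relationtofeynman}: the graph permanent is invariant under planar duality, completion/decompletion, and the Schnetz twist, has the right product behaviour on $3$-vertex cuts, and these operations ``explain all known cases where two primitive 4-point graphs in $\phi^4$ have the same period.'' That is precisely why the conjecture is stated --- and, as you observe, it only yields the conjecture \emph{conditionally} on the (unproven, and itself only an empirical observation about currently known examples) hypothesis that every period coincidence arises from a chain of these moves. So there is no disagreement between you and the paper; there is simply no proof on either side.

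Two small cautions on your conditional argument, in case you pursue it. First, the paper's phrase ``explain all known cases'' is an empirical statement, not even a conjecture that all coincidences must arise this way; promoting it to a hypothesis is a genuine strengthening. Second, even under that strengthened hypothesis the $3$-cut product identity introduces a wrinkle: two graphs can have equal periods because two \emph{products} of periods agree without the factors agreeing pairwise, in which case the corresponding products of graph permanents need not obviously coincide. Your parallel suggestion of bridging through the $c_2$ invariant is reasonable but faces the same fundamental obstacle you name at the end: extracting a combinatorial or arithmetic witness from the bare equality of two transcendental integrals. In short, your assessment of the problem's status is accurate, but what you have written is a research plan, not a proof, and the paper contains no proof to compare it against.
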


Now consider the case of subdivergences.  Internal 2-edge cuts in $\Gamma$ are not very interesting in this context because they automatically yield small vertex cuts as well.  Suppose $\Gamma$ has an internal 4-edge cut, as in Figure~\ref{4edgecutpic}.  Decompleting to $G$, say $G_2$ is the side which did not have the decompletion vertex.  Then $G_2$ is the subdivergence inside $G$.  The leading term in the renormalized period of $G$ is the product of the periods of $G/G_2$ and $G_2$, which is what is given by the graph permanent.  This further strengthens the suggestion that the graph permanent is measuring something about the period, and gives some initial hints of what kind of thing it could be measuring. 

In comparison, there is one two-valued invariant which we have some handle on, namely whether or not the period is of full transcendental weight (see \cite{BrY, Sphi4}).  This invariant has a different behaviour on subdivergences, namely all subdivergences give weight drop regardless of the weight of their pieces.  This shows that the graph permanent is capturing something different about the period.  We can also compare these two invariants on specific graphs.  For example, using names from \cite{Sphi4}, $P_{6,1}$ and $P_{6,4}$ both have graph permanent of $0$ while $P_{6,1}$ is full weight and $P_{6,4}$ has weight drop. Alternately, $P_{6,2}$ and $P_{6,3}$ both have nonzero graph permanent while $P_{6,2}$ is full weight and $P_{6,3}$ has weight drop.  Note that the names here label the completed graphs, or alternately label families of 4-point graphs related by completion.

The $c_2$ invariant is conjecturally another graph invariant with the same symmetries of the period \cite{BrS}.  It does specialize to capture the question of weight drop and, relatedly, is $0$ on graphs with subdivergences.  Thus the graph permanent is again capturing something different about the period.

Unfortunately, it simply is not clear what about the period the graph permanent is measuring.  The graph permanents for the completion-families of graphs are given in Appendix~\ref{calculatedperms}.

A final hint at the existence and potential nature of a connection between the graph permanent and the Feynman period comes from the fact that both of them are closely related to questions of (momentum) flows in graphs.  This is explained in the next section after the required graph theory of flows is presented.

\section{A connection to nowhere-zero flows}
\label{flow_sec}

The graph permanent also arises naturally in the study of nowhere-zero flows on graphs, and in this section we will develop this interesting connection.  To  this end we demonstrate that a graph has a certain orientation (closely related to the study of flows) under the assumption that a spanning subgraph has nonzero graph permanent.

Suppose that $G$ is a graph and (as before) direct the edges of $G$ arbitrarily.  For an abelian group $\mathcal{G}$, a function $\phi : E(G) \rightarrow \mathcal{G}$ has an associated boundary function $\partial \phi : V(G) \rightarrow \mathcal{G}$ given by the following rule (here $\delta^+(v)$ denotes the edges directed away from $v$ and $\delta^-(v)$ the edges directed to $v$);
\[ \partial \phi (v) = \sum_{ e \in \delta^+(v) } \phi(e) - \sum_{ e \in \delta^-(v) } \phi(e). \]
Note that $\sum_{v \in V(G)} \partial \phi (v) = 0$ since each edge $e$ contributes $\phi(e) - \phi(e) = 0$ to the total sum.  We define $\phi$ to be a $\mathcal{G}$-\emph{flow} if $\partial \phi$ is the zero function, and we say that $\phi$ is \emph{nowhere-zero} if $0 \not\in \phi ( E(G) )$, where $\phi(E(G))$ is the \emph{range} of $\phi$.  Note that if $\phi$ is a nowhere-zero flow, we may alter the orientation of $G$ by reversing the direction of some edge $e$ and modify $\phi$ by replacing $\phi(e)$ with its negation, and this results in another nowhere-zero flow.  Therefore, the question of whether our graph has a nowhere-zero flow in a particular group $\mathcal{G}$ will be independent of the chosen orientation.  Accordingly, we will say that an undirected graph has a nowhere-zero flow if some (and thus every) orientation permits such a map.   Nowhere-zero flows have a rich history initiated by Tutte who proved all of the following properties.  (Throughout we let $\mathbb{Z}_k = \mathbb{Z}/ k \mathbb{Z}$).  

\begin{theorem}[Tutte \cite{Tu49} \cite{Tu54}] $\mbox{}$ 
\begin{enumerate}
\item If $G$ and $G^*$ are dual planar graphs, then $G$ has a $k$-colouring if and only if $G^*$ has a nowhere-zero $\mathbb{Z}_k$-flow.

\item A graph $G$ has a nowhere-zero $\mathbb{Z}_k$-flow if and only if it has a $\mathbb{Z}$-flow with range a subset of $\{ \pm 1, \pm 2, \ldots, \pm (k-1) \}$.

\item If $G$ has a nowhere-zero $\mathcal{G}$-flow for a finite abelian group $\mathcal{G}$, then it has a nowhere-zero $\mathcal{G}'$-flow for every abelian group $\mathcal{G}'$ with $|\mathcal{G}'| \ge |\mathcal{G}|$.  
\end{enumerate}
\end{theorem}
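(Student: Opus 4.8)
The plan is to treat the three parts in order, with Part~2 carrying the real content and feeding into Part~3.

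\emph{Part~1.} I would route everything through the notion of a \emph{tension} (coboundary). Fix orientations of $G$ and $G^*$ that correspond under planar duality---each dual edge is the primal edge turned a quarter turn---so that $E(G)$ and $E(G^*)$ are identified. The first step is the classical fact that, under this identification and over any abelian group, the cycle space of $G$ equals the cut space of $G^*$, and conversely. A proper $\mathbb{Z}_k$-colouring $c$ of $G$ produces the coboundary $e \mapsto c(\mathrm{head}\,e) - c(\mathrm{tail}\,e)$, an element of the cut space of $G$ that is nowhere zero exactly because $c$ is proper; and on a connected graph every nowhere-zero element of the cut space arises from such a $c$, unique up to a global additive constant. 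Transporting this element across the duality turns a nowhere-zero element of the cut space of $G$ into a nowhere-zero element of the cycle space of $G^*$, that is, a nowhere-zero $\mathbb{Z}_k$-flow, and the correspondence is bijective; hence $G$ is $k$-colourable if and only if $G^*$ has a nowhere-zero $\mathbb{Z}_k$-flow (a disconnected $G$ is handled component by component).

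\emph{Part~2.} One direction is immediate: a $\mathbb{Z}$-flow with range inside $\{\pm 1,\dots,\pm(k-1)\}$ reduces modulo $k$ to a $\mathbb{Z}_k$-flow that is still nowhere zero, since none of those values is a multiple of $k$. For the converse, take a nowhere-zero $\mathbb{Z}_k$-flow $\phi$ and lift each $\phi(e)$ to the integer representative $f(e)$ in $\{\pm 1,\dots,\pm(k-1)\}$; because $\partial\phi\equiv 0$, each $\partial f(v)$ is a multiple of $k$. Among all such lifts---on each edge one may pick either of its two opposite-sign representatives---choose one minimizing $\sum_{v}|\partial f(v)|$, and suppose for contradiction the sum is positive; replacing $f$ by $-f$ if needed, some $v_0$ has $\partial f(v_0)\ge k$. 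Form the auxiliary digraph with an arc $u\to w$ whenever an edge between $u$ and $w$ lets us move $k$ units of excess from $u$ to $w$ while keeping $f$ in range: an out-edge $(u,w)$ with $f>0$ (lower it by $k$) or an in-edge $(w,u)$ with $f<0$ (raise it by $k$). Let $S$ be the set of vertices reachable from $v_0$. If $S$ meets a vertex of negative boundary, pushing $k$ units along a simple path to it lowers $\sum_v|\partial f(v)|$ by $2k$, contradicting minimality; otherwise, writing $\sum_{v\in S}\partial f(v)$ as a signed sum over the directed cut at $S$ and noting that each of those edges fails to be in the auxiliary digraph precisely because its $f$-value has the unfavourable sign, one gets $\sum_{v\in S}\partial f(v)\le 0$, contradicting $\partial f(v_0)\ge k$. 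Hence $\partial f\equiv 0$, and $f$ is the desired integral flow.

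\emph{Part~3.} The key lemma is that the number of nowhere-zero $\mathcal{G}$-flows of a fixed graph depends on $\mathcal{G}$ only through $|\mathcal{G}|$. To obtain it, note that for $F\subseteq E$ the $\mathcal{G}$-flows of $(V,F)$ form a group of order $|\mathcal{G}|^{|F|-|V|+c(V,F)}$, where $c$ counts components (the incidence matrix is totally unimodular, so tensoring with $\mathcal{G}$ gives a kernel of this size); inclusion--exclusion over the set of edges forced to carry $0$ then expresses the count of nowhere-zero $\mathcal{G}$-flows as $\sum_{F\subseteq E}(-1)^{|E\setminus F|}|\mathcal{G}|^{|F|-|V|+c(V,F)}$, a fixed polynomial in $|\mathcal{G}|$. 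So if $G$ has a nowhere-zero $\mathcal{G}$-flow with $|\mathcal{G}|=n$, this polynomial is positive at $n$, hence $G$ has a nowhere-zero $\mathbb{Z}_n$-flow; by Part~2 it has a $\mathbb{Z}$-flow with range in $\{\pm 1,\dots,\pm(n-1)\}$; for any $m\ge n$ that flow reduces modulo $m$ to a nowhere-zero $\mathbb{Z}_m$-flow, since each value is nonzero modulo $m$ because $m>n-1$; and applying the lemma once more, $G$ has a nowhere-zero $\mathcal{G}'$-flow for every abelian $\mathcal{G}'$ with $|\mathcal{G}'|=m\ge n$.

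The one genuinely delicate step is the converse in Part~2: the naive lift of a modular flow is not a flow, and the bound on the range forbids the usual shortest-path augmentation; the resolution is to start from the $\sum_v|\partial f(v)|$-minimal lift and exploit that the edges leaving the reachable set $S$ all carry $f$-values of the sign that makes $\sum_{v\in S}\partial f(v)$ non-positive. Parts~1 and~3 are then essentially bookkeeping---planar cycle/cut duality for the first, the deletion--contraction (inclusion--exclusion) polynomiality of the flow count for the third.
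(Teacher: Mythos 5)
The paper does not prove this statement at all: it is quoted as Tutte's theorem with citations to \cite{Tu49} and \cite{Tu54} and used as background for the discussion of nowhere-zero flows, so there is no in-paper argument to compare against. Your proposal is a correct, self-contained proof along the standard modern lines: Part~1 via the identification of nowhere-zero tensions (coboundaries of proper colourings) of $G$ with nowhere-zero flows of $G^*$ under planar cycle/cut duality; Part~2 via the lift-and-augment argument, where the only real content is the converse, and your treatment of it is sound --- the lift into $\{\pm 1,\dots,\pm(k-1)\}$ keeps all boundaries divisible by $k$, the auxiliary digraph correctly encodes the moves that shift $k$ units of excess while staying in range, and the cut computation at the reachable set $S$ gives the contradiction (note that the ``replacing $f$ by $-f$'' step is not actually needed, since $\sum_v \partial f(v)=0$ already forces a vertex of positive boundary whenever any boundary is nonzero); Part~3 via the flow polynomial, using that the count of nowhere-zero $\mathcal{G}$-flows is a polynomial in $|\mathcal{G}|$ by inclusion--exclusion and the unimodularity of the incidence matrix, combined with Part~2 to pass from $|\mathcal{G}|=n$ to any $|\mathcal{G}'|\ge n$. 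This chain of reductions (monotonicity deduced from integrality plus polynomiality of the flow count) is exactly the classical route, and I see no gaps; the only points you gloss over --- disconnected $G$ in Part~1 and isolated vertices in the component count of Part~3 --- are genuinely routine.
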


In addition, Tutte made three famous conjectures which have motivated a tremendous amount of investigation, but all remain unsolved.  

\begin{conjecture}[Tutte] $\mbox{}$
\begin{enumerate}
\item (5-Flow) Every graph without a cut-edge has a nowhere-zero $\mathbb{Z}_5$-flow.
\item (4-Flow) Every graph without a cut-edge and without a Petersen minor has a nowhere-zero $\mathbb{Z}_4$-flow.
\item (3-Flow) Every 4-edge-connected graph has a nowhere-zero $\mathbb{Z}_3$-flow.
\end{enumerate}
\end{conjecture}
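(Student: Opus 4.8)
These three statements are among the most celebrated open problems in graph theory, and I do not expect to resolve any of them; what follows is an honest account of the strategies I would pursue and where each stalls. The natural first move for all three is to pass, via Tutte's flow--colouring duality and part~(3) of the preceding theorem (monotonicity of flows in the order of the group), to the integer formulation of part~(2): a nowhere-zero $\mathbb{Z}_k$-flow exists if and only if there is an integer flow valued in $\{\pm 1,\dots,\pm(k-1)\}$. This replaces a group-theoretic existence question by a combinatorial one about orientations and conservation, and it is the common starting point of every serious attack. One would also reduce to a minimal counterexample and exploit minimality: for the 5- and 4-flow conjectures the reduction funnels down to cubic graphs, for which a $\mathbb{Z}_4$-flow is equivalent to a proper $3$-edge-colouring.

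For the \textbf{5-flow conjecture} the plan would build on the two unconditional results that bracket it, Jaeger's $8$-flow theorem and Seymour's $6$-flow theorem, and attempt to shave the bound from $6$ to $5$. Concretely one reduces to a cyclically $6$-edge-connected cubic graph of large girth and argues, by discharging or structural analysis, that such a graph cannot be a minimal counterexample. The main obstacle is exactly the gap that has resisted everyone: the modular obstruction to a $5$-flow is more rigid than for a $6$-flow, and there is no known decomposition of a graph into two pieces carrying a $\mathbb{Z}_2$- and a $\mathbb{Z}_3$-flow analogous to the $\mathbb{Z}_2 \times \mathbb{Z}_3$ splitting that powers the $6$-flow theorem.

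For the \textbf{4-flow conjecture} I would invoke the structure theorem of Robertson, Seymour and Thomas for graphs with no Petersen minor, and try to assemble a $\mathbb{Z}_4$-flow along the tree-decomposition it supplies, using that $\mathbb{Z}_4 \cong \mathbb{Z}_2 \times \mathbb{Z}_2$ permits combining two nowhere-zero $\mathbb{Z}_2$-flows whose supports cover every edge. The hard part is the apex and near-planar pieces: on planar cubic graphs a $\mathbb{Z}_4$-flow is dual to a proper $4$-colouring, so this route necessarily subsumes the Four Colour Theorem, and controlling how the $\mathbb{Z}_2 \times \mathbb{Z}_2$ data is glued across the small cuts of the decomposition is precisely where the argument becomes intractable.

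The \textbf{3-flow conjecture} is where the machinery of this paper might plausibly contribute. A nowhere-zero $\mathbb{Z}_3$-flow is equivalent to a \emph{modular $3$-orientation} (an orientation with $\mathrm{deg}^+(v) \equiv \mathrm{deg}^-(v) \pmod 3$ at every vertex), and the Alon--Tarsi technique discussed in Section~\ref{flow_sec} shows that a nonzero graph permanent forces such an orientation to exist. The plan would therefore be: given a $4$-edge-connected graph $G$, exhibit a spanning subgraph $H$ with $|E(H)| = 2(|V(H)|-1)$ whose graph permanent is nonzero modulo $3$, so that the orientation it certifies extends to the desired flow on $G$. I would start from the strongest known partial result, the theorem of Lov\'asz, Thomassen, Wu and Zhang that $6$-edge-connected graphs admit nowhere-zero $\mathbb{Z}_3$-flows, and try to replace their inductive contraction argument by a permanent-positivity certificate robust enough to survive down to connectivity $4$. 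The decisive obstacle is that the graph permanent frequently vanishes even when a modular orientation exists, so it is only a sufficient witness, never a necessary one; the real content would be a combinatorial criterion guaranteeing nonvanishing modulo $3$ for a suitable $H$ inside every $4$-edge-connected graph, and no such criterion is presently known.
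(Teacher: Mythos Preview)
The paper does not prove this statement: it is explicitly presented as a \emph{conjecture}, and the surrounding text says that these ``three famous conjectures \ldots\ all remain unsolved.'' There is therefore no proof in the paper to compare against. Your proposal correctly identifies this and, rather than pretending to a proof, gives an honest survey of the standard reductions, the known partial results (Seymour's 6-flow theorem, the Lov\'asz--Thomassen--Wu--Zhang result), and the obstacles specific to each part. That is the appropriate response here; there is no gap to name because you have not claimed to close one.
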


In his investigations of flows, Jaeger introduced the following interesting concept.  For an (undirected) graph $G$, define an orientation of its edges to be a \emph{modulo} $k$ orientation if every vertex $v$ satisfies $|\delta^+(v)| - |\delta^-(v)| \equiv 0 \pmod{k}$.  If we have a modulo $k$ orientation of a graph, then we can obtain a nowhere-zero $\mathbb{Z}_k$ flow by assigning each edge to have flow value $1$.  On the other hand, if we have found a flow $\phi: E(G) \rightarrow \mathbb{Z}_k$ for which $\phi ( E(G) ) \subseteq \{ -1, 1\}$ then by reversing the edges with flow value $-1$, we obtain a modulo $k$ orientation.  So, in short, a modulo $k$ orientation is equivalent to the existence of a $\mathbb{Z}_k$-flow with range a subset of $\{-1,1\}$.  Jaeger offered the following unifying conjecture which, if true, is known to imply both Tutte's 5-flow and 3-flow conjectures.

\begin{conjecture}[Jaeger \cite{Ja84}]
Every $4k$-edge-connected graph has a modulo $2k+1$ orientation.  
\end{conjecture}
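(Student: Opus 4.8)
\emph{A caveat at the outset: this is Jaeger's circular-flow conjecture, which is wide open --- the case $k=1$ alone is equivalent to Tutte's $3$-flow conjecture for $4$-edge-connected graphs. So the following is the line I would pursue with the tools of this paper, not a proof.}

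The plan is to convert the orientation problem into a non-vanishing statement for the graph permanent. Let $H$ be a $4k$-edge-connected graph on $n$ vertices, so $|E(H)| \ge 2kn$. I would first extract a spanning connected sub-multigraph $G \subseteq H$ with $|E(G)| = 2k(n-1)$, chosen so that the discarded edges $E(H)\setminus E(G)$ can be oriented so as to keep every vertex balance divisible by $2k+1$ --- the cleanest way being to arrange that these edges form a disjoint union of cycles, which the many edge-disjoint spanning trees supplied by high edge-connectivity (Nash-Williams/Tutte) should make possible. For such a $G$ the Alon--Tarsi polynomial technique described in Section~\ref{flow_sec} converts a nonzero value of the graph permanent of $G$ modulo $2k+1$ into a modulo $2k+1$ orientation of $G$; orienting each discarded cycle cyclically extends this to all of $H$. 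Thus the conjecture would follow from: \emph{every sufficiently edge-connected $G$ with $|E(G)| = 2k(|V(G)|-1)$ has graph permanent $\not\equiv 0 \pmod{2k+1}$.}

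To prove that non-vanishing I would induct on $|V(G)|$ via the two standard reductions for modulo-orientation problems --- splitting off a pair of edges at a low-degree vertex (Mader's theorem preserves the needed edge-connectivity) and contracting a highly connected subgraph --- while tracking the $k$DSI permanent through each move. Here the product identities already established are encouraging: Theorem~\ref{2vertexcut}, Corollary~\ref{3cut} and Theorem~\ref{4edgecut} show the permanent factors cleanly across exactly the small cuts these reductions expose, and Corollary~\ref{reduction} licenses the row operations that make such factorizations visible. One would hope to prove a companion statement of the form ``the permanent of the split-off graph is a nonzero multiple of (or equal to) the original modulo $2k+1$'', which would carry non-vanishing down the induction.

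The hard part --- and the reason $4k$ is unreachable by present methods --- is precisely this inductive step. There is no known certificate that the permanent stays nonzero modulo $2k+1$ after splitting off: $2k+1$ need not be prime, so the Wilson-type collapse of Proposition~\ref{prime} can annihilate all the information (indeed for composite $2k+1$ one would have to work with the integer-valued signed orientation count, or a prime-power reformulation, rather than its reduction mod $2k+1$), and edge-connectivity alone gives no lower bound on that signed count. Even unconditionally, the strongest known result --- that every $6k$-edge-connected graph has a modulo $2k+1$ orientation (Lov\'asz, Thomassen, Wu and Zhang) --- is obtained by a delicate direct contraction/discharging argument rather than through any single invariant, and it still leaves the gap to $4k$. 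So realistically the most one could hope to get from the graph permanent is a re-proof of the $6k$ (or some $ck$) bound, with the descent from a linear bound to the sharp $4k$ being the genuine obstacle.
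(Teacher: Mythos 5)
You were right to refuse to supply a proof: this statement is Jaeger's modulo-orientation conjecture, and the paper states it only as a conjecture with no proof attached --- the strongest result it records is the Lov\'asz--Thomassen--Wu--Zhang theorem that $(3k-3)$-edge-connectivity suffices for a modulo $k$ orientation, exactly the $6k$ bound you cite. So there is nothing in the paper for your attempt to diverge from, and your framing of the problem's status is accurate.

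Two remarks on the program you sketch. First, it does track the paper's own machinery: the Alon--Linial--Meshulam theorem in Section~\ref{flow_sec} is precisely the step ``nonzero graph permanent of a spanning subgraph modulo $p$ implies a modulo $p$ orientation,'' and your observation that this only works when $2k+1$ is prime is correct and is forced by Proposition~\ref{prime}, which kills the permanent modulo any composite $k+1$. Second, your proposed intermediate claim --- that sufficient edge-connectivity forces the graph permanent to be nonzero --- is more fragile than you indicate: the Alon--Tarsi certificate is sufficient but not necessary, and the paper's own Appendix~\ref{calculatedperms} lists many well-connected graphs (e.g.\ the decompletions in the $P_{6,1}$, $P_{6,4}$, $P_{7,3}$ families, all with $|E|=2(|V|-1)$) whose graph permanent vanishes modulo $3$ even though the corresponding orientations exist. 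Note also that a graph with $|E(G)|=2k(|V(G)|-1)$ has average degree below $4k$ and hence cannot itself be $4k$-edge-connected, so the non-vanishing hypothesis would have to be phrased for the ambient graph $H$ rather than the extracted $G$. None of this is a flaw in a proof, since you offered none; it just confirms that the permanent gives at best a one-sided certificate and cannot by itself decide the conjecture.
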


A recent flurry of activity started by Thomassen's proof of a weak version of the 3-flow conjecture \cite{Th} has resulted in a proof of a weak version of the above conjecture.  Namely, Lov\'{a}sz, Thomassen, Wu, and Zhang \cite{LoThWuZh} have recently proved that every $(3k-3)$-edge-connected graph has a modulo $k$ orientation.  

With an eye toward constructing flows and modulo $k$ orientations, let us now return to incidence matrices.  As before, we shall use $M^*$ to denote the oriented incidence matrix of $G$.  If we regard $M^*$ as a matrix with entries in $\mathcal{G}$, then a flow in $\mathcal{G}$ is precisely a vector in the nullspace of $M^*$.  Since the sum of the rows of $M^*$ is zero, the matrix $M^*$ will have the same nullspace as the matrix $M$ which we obtain from $M^*$ by deleting the row corresponding to an arbitrarily chosen special vertex $w$.  This recasts the problem of finding a nowhere-zero $\mathcal{G}$ flow in $G$ as one of finding a vector in the nullspace of $M$ with no zero entries.  Turning our attention to the special case $\mathcal{G} = \mathbb{Z}_k$, we note that the existence of a modulo $k$ orientation in $G$ is equivalent to the existence of a $\pm 1$ valued vector in the nullspace of $M$.  

The main tool we will require for our result is the following ``polynomial method''.

\begin{theorem}[Alon and Tarsi \cite{AlTar}]
\label{polymeth} 
Let $\mathbb{F}$ be a field and let $f(x_1, \ldots, x_n)$ be a polynomial in $\mathbb{F}[x_1, \ldots, x_n]$.  Suppose that the coefficient of $x_1^{d_1} x_2^{d_2} \ldots x_n^{d_n}$ is nonzero and that $\deg(f) = d_1 + d_2 + \ldots + d_n$.  Then for every $S_1, \ldots, S_n \subseteq \mathbb{F}$ with $|S_i| > d_i$ for $1 \le i \le n$, there exist $s_i \in S_i$ for $1 \le i \le n$ so that $f(s_1, \ldots, s_n) \neq 0$.
\end{theorem}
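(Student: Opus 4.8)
The plan is to run the standard two-part argument behind this ``polynomial method'' (Alon's Combinatorial Nullstellensatz). The first ingredient is a multivariate sharpening of the fact that a nonzero one-variable polynomial of degree $d$ has at most $d$ roots: if $g \in \mathbb{F}[x_1,\dots,x_n]$ has $\deg_{x_i}(g) \le t_i$ for every $i$ and $g$ vanishes on all of $T_1 \times \cdots \times T_n$ for subsets $T_i \subseteq \mathbb{F}$ with $|T_i| > t_i$, then $g = 0$. I would prove this by induction on $n$. The base case is the univariate root bound. For the step, write $g = \sum_{j=0}^{t_n} g_j(x_1,\dots,x_{n-1})\, x_n^j$; for each fixed point of $T_1 \times \cdots \times T_{n-1}$ the resulting polynomial in $x_n$ alone vanishes on $T_n$, hence is identically zero, so every $g_j$ vanishes on $T_1 \times \cdots \times T_{n-1}$, and the inductive hypothesis gives $g_j = 0$ for all $j$.

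The second part is a reduction argument by contradiction. Suppose $f(s_1,\dots,s_n) = 0$ for every choice $s_i \in S_i$. Put $t_i = |S_i| - 1 \ge d_i$ and let $g_i(x_i) = \prod_{s \in S_i}(x_i - s)$, a monic polynomial of $x_i$-degree $t_i + 1$, so that $x_i^{t_i+1}$ equals $g_i(x_i)$ minus a polynomial of $x_i$-degree at most $t_i$. By repeatedly substituting this relation for each power of $x_i$ exceeding $t_i$, processing one variable at a time, I would obtain an identity $f = \bar f + \sum_{i=1}^n h_i g_i$ in which $\deg_{x_i}(\bar f) \le t_i$ for all $i$, no substitution raises the total degree, and $\deg(h_i g_i) \le \deg(f)$. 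Since $g_i$ kills $s_i$, the polynomial $\bar f$ agrees with $f$ on the grid $S_1 \times \cdots \times S_n$ and hence vanishes there, so the first ingredient forces $\bar f = 0$.

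To finish, I would inspect the coefficient of the monomial $m = x_1^{d_1}\cdots x_n^{d_n}$, whose total degree is exactly $\deg(f)$. In $f = \bar f + \sum_i h_i g_i$ the term $\bar f$ is now zero, and the only way $h_i g_i$ can contribute to a monomial of total degree $\deg(f)$ is through the product of the highest-total-degree homogeneous parts of $h_i$ and $g_i$; every such monomial is divisible by $x_i^{t_i+1}$ and so has $x_i$-degree exceeding $d_i$, hence is not $m$. Therefore the coefficient of $m$ in $f$ equals its coefficient in the zero polynomial, contradicting the hypothesis that it is nonzero.

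The one place that needs genuine care — and the step I expect to be the main obstacle to write down cleanly — is the construction of $\bar f$ with the three bounds holding simultaneously: reducing the $x_i$-degree must not reintroduce a high power of any already-treated variable $x_j$ (true because $g_i$ involves $x_i$ only, so the multiplier $h_i$ inherits the $x_j$-degree bound already in force) and must not increase the total degree (true because $x_i^{t_i+1}$ is replaced by a polynomial of lower $x_i$-degree and no larger total degree). Everything else reduces to the univariate root count and a clean induction.
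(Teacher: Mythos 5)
Your proposal is correct: it is the standard derivation of the Alon--Tarsi nonvanishing criterion via what is now called the Combinatorial Nullstellensatz --- the grid-vanishing lemma (induction on $n$ reducing to the univariate root bound), followed by division of $f$ by the polynomials $g_i(x_i) = \prod_{s \in S_i}(x_i - s)$ and the observation that the top-degree part of each $h_i g_i$ is divisible by $x_i^{t_i+1}$ and so cannot contain the monomial $x_1^{d_1}\cdots x_n^{d_n}$. The degree bookkeeping you flag as the delicate point is handled correctly: since each $g_i$ involves only $x_i$, reducing in $x_i$ preserves the bounds already established for the other variables and never raises total degree. Note that the paper states this theorem as an imported result with a citation to Alon and Tarsi and gives no proof of it, so there is no in-paper argument to compare against; your writeup supplies a complete and standard one.
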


Now we shall restrict our attention to the case when $\mathcal{G} = \mathbb{Z}_p$ for a prime $p$ so that our matrix $M$ has its entries in a field.  In this setting, the graph permanent arises naturally in conjunction with the above theorem to produce a certificate for a graph which guarantees the existence of a modulo $p$ orientation of $G$.  Namely, we will prove the following result which appears implicitly in \cite{AlLiMe}.

\begin{theorem}[Alon, Linial, Meshulam]
Let $p$ be prime, let $G$ be an $n$ vertex graph, and let $H$ be a spanning subgraph of $G$ with $|E(H)| = (p-1)(n-1)$.  If a $(p-1)$DSI matrix for $H$ has nonzero permanent modulo $p$, then $G$ has a modulo $p$ orientation.
\end{theorem}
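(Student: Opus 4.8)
The plan is to run the Alon--Tarsi polynomial method (Theorem~\ref{polymeth}) over the field $\mathbb{Z}_p$, using the permanent of the $(p-1)$DSI matrix of $H$ as the nonvanishing coefficient that the method requires. Fix an arbitrary orientation of $G$ and an arbitrary vertex $w$, and let $M$ be the incidence matrix $M^*$ of $G$ with the row of $w$ deleted, so $M$ has rows indexed by $V(G)\setminus\{w\}$ and columns by $E(G)$. As noted just before the statement, it suffices to find $s\in\{-1,1\}^{E(G)}$ with $Ms\equiv 0\pmod p$, since such an $s$ is a $\mathbb{Z}_p$-flow with range contained in $\{-1,1\}$, equivalently a modulo $p$ orientation of $G$. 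To this end I would introduce
\[ F(x) \;=\; \prod_{v\in V(G)\setminus\{w\}}\bigl(1-\ell_v(x)^{p-1}\bigr)\ \in\ \mathbb{Z}_p[\,x_e:e\in E(G)\,], \qquad \ell_v(x)=\sum_{e\in\delta^+(v)}x_e-\sum_{e\in\delta^-(v)}x_e, \]
where $\ell_v$ is the $v$-th coordinate of $Mx$. Since $a^{p-1}=1$ for $a\in\mathbb{Z}_p\setminus\{0\}$ and $a^{p-1}=0$ for $a=0$, the polynomial $F$ takes the value $1$ on vectors in $\ker M$ and $0$ on every other vector of $\mathbb{Z}_p^{E(G)}$; hence it is enough to show that $F$ does not vanish identically on $\{-1,1\}^{E(H)}\times\{1\}^{E(G)\setminus E(H)}$.

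The crux is a coefficient computation. Because $G$ is connected, each $\ell_v$ is a nonzero linear form, so $F$ has degree exactly $(p-1)(n-1)$ and its top-degree homogeneous component is $(-1)^{n-1}\prod_{v\ne w}\ell_v(x)^{p-1}$. The squarefree monomial $\prod_{e\in E(H)}x_e$ has degree $|E(H)|=(p-1)(n-1)=\deg F$, so its coefficient in $F$ is read off entirely from that top-degree component and equals $(-1)^{n-1}$ times its coefficient in $\prod_{v\ne w}\ell_v(x)^{p-1}$. Writing the latter as a product $\prod_i L_i(x)$ of $(p-1)(n-1)$ linear forms --- namely $p-1$ copies of each $\ell_v$, which is exactly the list of rows of the $(p-1)$DSI matrix of $G$ with special vertex $w$ --- and using the elementary fact that the coefficient of a squarefree monomial $\prod_{e\in S}x_e$ (with $|S|$ equal to the number of forms) in a product of linear forms is the permanent of the coefficient matrix restricted to the columns $S$, this coefficient is the permanent of the column-$E(H)$ submatrix of the $(p-1)$DSI matrix of $G$. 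Since $H$ is a spanning subgraph of $G$, that submatrix is precisely a $(p-1)$DSI matrix of $H$ with special vertex $w$ and the inherited orientation; so by hypothesis the coefficient of $\prod_{e\in E(H)}x_e$ in $F$ is $\pm\,\text{Perm}(M_H)\not\equiv 0\pmod p$.

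Now I would conclude by applying Theorem~\ref{polymeth} to $F$, taking $d_e=1$ for $e\in E(H)$ and $d_e=0$ for $e\in E(G)\setminus E(H)$: then $\sum_e d_e=(p-1)(n-1)=\deg F$ and, by the previous paragraph, the coefficient of $\prod_e x_e^{d_e}$ is nonzero in $\mathbb{Z}_p$. Choosing $S_e=\{-1,1\}$ for $e\in E(H)$ and $S_e=\{1\}$ for $e\in E(G)\setminus E(H)$ gives $|S_e|>d_e$ for every $e$, so the theorem yields $s_e\in S_e$ with $F(s)\ne 0$. By the indicator property of $F$ this means $Ms\equiv 0\pmod p$; as the $w$-row of $M^*$ is the negative of the sum of the other rows, also $M^*s\equiv 0$, so $s$ is a nowhere-zero $\mathbb{Z}_p$-flow with range in $\{-1,1\}$, and therefore $G$ has a modulo $p$ orientation.

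I expect the only real obstacle to be carrying out the coefficient bookkeeping cleanly: verifying that no lower-degree terms of $F$ can contribute to the monomial $\prod_{e\in E(H)}x_e$, that the coefficient extracted from the top-degree component is a genuine permanent rather than a signed sum (it is, because the contributing products of matrix entries correspond exactly to bijections between the rows of the $(p-1)$DSI matrix of $G$ and its $E(H)$-columns), and that the column-$E(H)$ submatrix of that matrix really is a $(p-1)$DSI matrix of $H$ matching the object in the hypothesis. Once this is in place the edges of $E(G)\setminus E(H)$ cause no difficulty --- they simply receive flow value $1$ --- but they must be kept among the variables of $F$, with singleton sets $S_e$, so that the conclusion concerns $G$ and not merely $H$.
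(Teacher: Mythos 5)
Your proof is correct and follows essentially the same route as the paper: both apply the Alon--Tarsi theorem to a polynomial whose nonvanishing encodes the flow condition, and both identify the coefficient of $\prod_{e \in E(H)} x_e$ in the top-degree component $\prod_{v \neq w} \ell_v^{p-1}$ with the permanent of the $(p-1)$DSI matrix of $H$. The only differences are cosmetic: you use the Fermat indicator $1-\ell_v^{p-1}$ and keep the edges of $E(G)\setminus E(H)$ as variables with singleton sets $S_e=\{1\}$, whereas the paper pre-assigns arbitrary $\pm 1$ values to those edges and writes each vertex factor as $\prod_{a \in A_v}(-a+\ell_v)$.
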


\begin{proof}  Begin by orienting the edges of $G$ arbitrarily.  Define $H' = G - E(H)$ and choose $\phi' : E( H' ) \rightarrow \{ -1, 1 \}$ arbitrarily.  Our goal will be to use the polynomial method to prove that $\phi'$ may be extended to a $\mathbb{Z}_p$ flow of $G$ with range a subset of $\{-1,1\}$.  To do so, define $V' = V \setminus \{w\}$ and for every $v \in V'$ define $A_v = \mathbb{Z}_p \setminus \{\partial \phi'(v) \}$.  Now we shall construct a polynomial $f$ with a variable $x_e$ for every edge $e \in E(H)$ by the following rule: 
\[ f = \prod_{v \in V' } \prod_{ a \in A_v } \left( -a + \sum_{e \in \delta_H^+(v)} x_e - \sum_{e \in \delta_H^-(v)} x_e  \right). \]

Let us pause to consider what it would mean for this polynomial to be nonzero on a particular assignment to the variables.  Namely, let $\phi : E(H) \rightarrow \mathbb{Z}_p$ and suppose that evaluating $f$ where each variable $x_e$ is assigned the value $\phi(e)$ gives a nonzero value.  Considering the innermost product in our equation, we see that in order for $f$ to be nonzero when evaluated at $\phi$ it must be that $\partial \phi (v)$ is not equal to $a$ for every $a \in  A_v$.  However, this is precisely equivalent to the statement that $\partial \phi(v) = - \partial \phi'(v)$.  Since this must hold at every $v \in V'$ we have that the function $\phi \cup \phi'$ (i.e. the function which maps each $e \in E(H')$ to $\phi'(e)$ and each $e \in E(H)$ to $\phi(e)$) is a flow.  Indeed, this polynomial evaluated at $\phi$ will result in a nonzero value precisely when $\phi \cup \phi'$ is a flow.  

Now  consider the coefficient of $\prod_{e \in E(H)} x_e$ in the expansion of $f$.  Since this term has degree $|E(H)| = (p-1)(n-1)$ we can see that this is the same as the coefficient of the same term in the expansion of the polynomial 
\[ \prod_{v \in V' } \left( \sum_{e \in \delta_H^+(v)} x_e - \sum_{e \in \delta_H^-(v)} x_e  \right)^{p-1}. \]
However, this is precisely the permanent of the $(p-1)$DSI matrix of $H$ with special vertex $w$.  So, by assumption, this coefficient is nonzero, and then (since $\deg(f) \le (p-1)(n-1)$) by the Alon-Tarsi theorem, we may choose an assignment to these variables $\phi : E(H) \rightarrow \{ -1, 1 \}$ in such a way that evaluating $f$ on these inputs is nonzero.  As we have seen, this gives us a $\mathbb{Z}_p$-flow $\phi \cup \phi'$ with range a subset of $\{-1,1\}$, or equivalently a modulo $p$ orientation of $G$, as desired.
\end{proof}

Further evidence of the connection between our invariant and the Feynman integral can be found in the polynomial appearing in the previous proof.  Indeed, just as the period of a graph can be computed using a number of different integrals (one of which features integrating over a basis of the cycle space), so the polynomial coefficient in our proof has many essentially equivalent variations.  To give a concrete instance of this, let $G = (V,E)$ be a graph with $|E| = 2 |V| - 2$ and assume that $G$ is equipped with an arbitrary orientation of the edges.  Following the line of the previous proof, we will construct a polynomial and use the polynomial technique to show the existence of a nowhere-zero $\mathbb{Z}_3$-flow on $G$ (under certain additional assumptions).  Though the polynomial we construct will be quite different from that in the proof of the previous theorem, it will turn out that the coefficient of interest will be same (up to sign).  

Choose a spanning tree $T$ of $G$ and introduce a variable (from $\mathbb{Z}_3$) denoted $y_e$ for every edge $e \in E \setminus E(T)$.  These variables may be viewed as indexing the $\mathbb{Z}_3$-cycle space of $G$.  For $f \in E(T)$ there is a unique edge-cut in $G$ which contains $f$ but no other edge of $T$, called the fundamental cut of $f$.  Define $C_f^+$ to be the set of edges in this cut (other than $f$) which are oriented the same as $f$ relative to this cut, and $C_f^-$ to be those edges in the cut with opposite orientation to $f$.  Using these, we define the following linear polynomial (with variables $\{ y_e \}_{e \in E \setminus E(T)}$)
\[ g_f = \sum_{e \in C_f^-} y_e - \sum_{e \in C_f^+} y_e. \]
It follows from basic theory that every assignment to all of the $y_e$ variables extends uniquely to a $\mathbb{Z}_3$-flow of $G$.  Furthermore, for this flow, the value on an edge $f \in E(T)$ is given by $g_f$.  

Now we will proceed in our attempt to use the polynomial technique to find a nowhere-zero $\mathbb{Z}_3$-flow in $G$.  To do this, define the polynomial
\[ g = \prod_{f \in E(T)} g_f. \]
If the coefficient of the monomial $\prod_{e \in E \setminus E(T)} y_e$ in the expansion of $g$ is nonzero, then by Theorem \ref{polymeth}, there exists an assignment to the variables using only the elements $\{-1,1\}$ so that $g$ evaluates to a nonzero number.  However, this is precisely what is required to have a nowhere-zero flow.  So, in short, we have now found another polynomial coefficient which, if nonzero, implies the existence of a nowhere-zero $\mathbb{Z}_3$-flow in our graph.  

Although the polynomial $g$ we have constructed depends on the choice of spanning tree, the coefficient of the term $\prod_{e \in E \setminus E(T)} y_e$ (up to sign) does not depend on this choice.  Furthermore, (up to sign) this coefficient is the same as the coefficient of $\prod_{e \in E(H)} x_e$ in the expansion of $f$ from the previous proof.  To see this, let $M$ be a matrix obtained from the incidence matrix of $G$ by removing a row, and assume (for convenience) that the columns of $M$ associated with edges in $T$ appear before those associated with edges in $E \setminus E(T)$.  It follows from basic theory that the matrix $M$ may be turned by row operations  into a matrix for which the columns associated with edges in $T$ form an identity matrix.  So, by row operations we may transform $M$ into a matrix of the form $\begin{bmatrix} I & A \end{bmatrix}$.  Now, working in the field $\mathbb{Z}_3$ we have
\[ \text{Perm} \begin{bmatrix} M \\ M \end{bmatrix} = \pm \text{Perm} \begin{bmatrix} I & A \\ I & A \end{bmatrix} = \pm \text{Perm} A. \]
By the previous proof, the coefficient of $\prod_{e \in E(H)} x_e$ in the expansion of $f$ is $\text{Perm} \begin{bmatrix} M \\ M \end{bmatrix}$, and by elementary reasoning, the coefficient of $\prod_{ e \in E \setminus E(T) } y_e $ in our polynomial $g$ is equal to $\text{Perm} A$, thus yielding the desired connection.

This polynomial $g$ is closely related to the Feynman integrand in momentum space. As before we are taking the simplest possible case of a Euclidean massless scalar field theory.  To set up the Feynman integrand in momentum space first take a basis of the cycle space and assign a variable to each cycle in the basis; in particular the $y_e$ are appropriate.  These variables represent the momentum flowing around the cycles and we view them as taking values in $\mathbb{R}^4$.  To each edge associate the signed sum of the variables for the cycles running through that edge; this is the momentum flowing through that edge and is $g_f$ for $f\in T$ and is $y_e$ itself for $e \not\in T$.  Let 
\[
\tilde{g} = \prod_{f\in T}|g_f|^2\prod_{e\not\in T}|y_e|^2
\]  
where the norms are the usual Euclidean norm.
The Feynman integrand is $1/\tilde{g}$ and the integral runs over all values of the $y_e$.  After modifying the integral to use the projective volume measure, $\sum(-1)^i \prod_{\substack{j \neq i}}dy_j$ analogously to subsection~\ref{relationtofeynman}, this calculates the same period (see \cite{Sphi4} for discussion and proof in close to this language).  

Returning to the polynomials, another way to look at the construction of $\tilde{g}$ is to assign a momentum variable to each edge but then impose momentum conservation at each vertex, which is just different language for the flow condition.  Furthermore, we see that $\tilde{g}$ and $g$ only differ in two ways.  First the norm squared has replaced the simple appearance of variables which is a natural adjustment to vector valued variables.  Second $\tilde{g}$ has a factor for the edges not in $T$.  If we put an analogous factor in $g$ we would be multiplying $g$ by the product of all the variables.  This has essentially no impact on the use of the polynomial technique as it simply shifts up all degrees.  

Ultimately, this close relationship between flow calculations in graph theory and Feynman integrals should not be surprising since the momentum space Feynman integral is the integral over all possible momentum flows through the graph.

\section*{Acknowledgment}

The authors acknowledge the support of NSERC.

\appendix
\section{Primitive $\phi^4$ graphs and their graph permanent} \label{calculatedperms}

\begin{tabular}{|c| p{5.2cm} |p{5.2cm}|}\hline 
$|V|$ & $0$ & $\pm 1$  \\ \hline
5 &  
$ P_{ 3 ,  1 }$ & 
\\ \hline
6 & 
& 
$ P_{ 4 ,  1 }$\\ \hline
7 & 
& 
$ P_{ 5 ,  1 } $\\ \hline
8 & 
$ P_{ 6 ,  1 },$  $P_{ 6 ,  4 } $  & 
$ P_{ 6 ,  2 }, $ $ P_{ 6 ,  3 } $ \\ \hline
9 & 
$ P_{ 7 ,  3 },$ $P_{ 7 ,  5 },$ $P_{ 7 ,  9 },$ $P_{ 7 , 1 0 },$ $P_{ 7 , 1 1 }$ & 
$ P_{ 7 ,  1 }, $ $ P_{ 7 ,  2 }, $ $ P_{ 7 ,  4 }, $ $ P_{ 7 ,  6 }, $ $ P_{ 7 ,  7 }, $ $ P_{ 7 ,  8 } $ \\ \hline
10 & 
$ P_{ 8 ,  5 }, $ $ P_{ 8 ,  6 }, $  $ P_{ 8 ,  9 }, $ $ P_{ 8 , 1 4 }, $ $ P_{ 8 , 1 7 }, $ $ P_{ 8 , 1 8 }, $ $ P_{ 8 , 2 3 }, $ $ P_{ 8 , 2 5 }, $ $ P_{ 8 , 3 1 }, $ $ P_{ 8 , 3 3 }, $ $ P_{ 8 , 3 5 }, $           $ P_{ 8 , 3 9 }, $ $ P_{ 8 , 4 1 }$ & 
$ P_{ 8 ,  1 }, $ $ P_{ 8 ,  2 }, $ $ P_{ 8 ,  3 }, $ $ P_{ 8 ,  4 }, $ $ P_{ 8 ,  7 }, $ $ P_{ 8 ,  8 }, $ $ P_{ 8 , 1 0 }, $ $ P_{ 8 , 1 1 }, $ $ P_{ 8 , 1 2 }, $ $ P_{ 8 , 1 3 }, $ $ P_{ 8 , 1 5 }, $ $ P_{ 8 , 1 6 }, $ $ P_{ 8 , 1 9 }, $ $ P_{ 8 , 2 0 }, $  $ P_{ 8 , 2 1 }, $  $ P_{ 8 , 2 2 }, $ $ P_{ 8 , 2 4 }, $ $ P_{ 8 , 2 6 }, $  $ P_{ 8 , 2 7 }, $ $ P_{ 8 , 2 8 }, $  $ P_{ 8 , 2 9 }, $ $ P_{ 8 , 3 0 }, $ $ P_{ 8 , 3 2 }, $ $ P_{ 8 , 3 4 }, $ $ P_{ 8 , 3 6 }, $  $ P_{ 8 , 3 7 }, $  $ P_{ 8 , 3 8 }, $  $ P_{ 8 , 4 0 } $  \\ \hline
\end{tabular}

\vspace{0.5cm}
The notation used comes from \cite{Sphi4}.

\bibliography{cites}{}

\begin{thebibliography}{10}

\bibitem{AlLiMe}
Noga Alon, Nati Linial, and Roy Meshulam.
\newblock Additive bases of vector spaces over prime fields.
\newblock {\em Journal of Combinatorial Theory}, Series A(57):203--210, 1991.

\bibitem{AlTar}
Noga Alon and Michael Tarsi.
\newblock Colorings and orientations of graphs.
\newblock {\em Combinatorica}, 12(2):125--134, 1992.

\bibitem{bek}
Spencer Bloch, H\'el\`ene Esnault, and Dirk Kreimer.
\newblock On motives associated to graph polynomials.
\newblock {\em Commun. Math. Phys.}, 267:181--225, 2006.
\newblock arXiv:math/0510011v1 [math.AG].

\bibitem{bkphi4}
David Broadhurst and Dirk Kreimer.
\newblock Knots and numbers in $\phi^4$ theory to 7 loops and beyond.
\newblock {\em Int.J.Mod.Phys.}, C6(519-524), 1995.
\newblock arXiv:hep-ph/9504352.

\bibitem{Brbig}
Francis Brown.
\newblock On the periods of some {F}eynman integrals.
\newblock arXiv:0910.0114.

\bibitem{BrKr}
Francis Brown and Dirk Kreimer.
\newblock Angles, scales and parametric renormalization.
\newblock {\em Letters in Mathematical Physics}, 103(9):933--1007, 2013.

\bibitem{BrS}
Francis Brown and Oliver Schnetz.
\newblock A {K3} in $\phi^4$.
\newblock {\em Duke Math J.}, 161(10):1817--1862, 2012.
\newblock arXiv:1006.4064.

\bibitem{BrY}
Francis Brown and Karen Yeats.
\newblock Spanning forest polynomials and the transcendental weight of
  {F}eynman graphs.
\newblock {\em Commun. Math. Phys.}, 301(2):357--382, 2011.
\newblock arXiv:0910.5429.

\bibitem{Dor}
Dmitry Doryn.
\newblock The $c_2$ invariant is invariant.
\newblock arXiv:1312.7271.

\bibitem{Ja84}
F.~Jaeger.
\newblock On circular flows in graphs.
\newblock In {\em Finite and infinite sets, {V}ol.\ {I}, {II} ({E}ger, 1981)},
  volume~37 of {\em Colloq. Math. Soc. J{\'a}nos Bolyai}, pages 391--402.
  North-Holland, Amsterdam, 1984.

\bibitem{LoThWuZh}
L{{\'a}}szl{{\'o}}~Mikl{{\'o}}s Lov{{\'a}}sz, Carsten Thomassen, Yezhou Wu, and
  Cun-Quan Zhang.
\newblock Nowhere-zero 3-flows and modulo {$k$}-orientations.
\newblock {\em J. Combin. Theory Ser. B}, 103(5):587--598, 2013.

\bibitem{Mar}
Matilde Marcolli.
\newblock {\em Feynman Motives}.
\newblock World Scientific, 2010.

\bibitem{Sphi4}
Oliver Schnetz.
\newblock Quantum periods: A census of $\phi^4$-transcendentals.
\newblock {\em Communications in Number Theory and Physics}, 4(1):1--48, 2010.
\newblock arXiv:0801.2856.

\bibitem{Th}
Carsten Thomassen.
\newblock The weak 3-flow conjecture and the weak circular flow conjecture.
\newblock {\em J. Combin. Theory Ser. B}, 102(2):521--529, 2012.

\bibitem{Tu49}
W.~T. Tutte.
\newblock On the imbedding of linear graphs in surfaces.
\newblock {\em Proc. London Math. Soc. (2)}, 51:474--483, 1949.

\bibitem{Tu54}
W.~T. Tutte.
\newblock A contribution to the theory of chromatic polynomials.
\newblock {\em Canadian J. Math.}, 6:80--91, 1954.

\end{thebibliography}
\bibliographystyle{plain}

\end{document}